\documentclass[11pt,a4paper,reqno]{amsart}     

\usepackage{mathptmx}      
\usepackage{amsmath,amssymb,hyperref}


%
%

%
%
\newtheorem{theorem}{Theorem}
\numberwithin{theorem}{section}
\newtheorem{proposition}{Proposition}
\numberwithin{proposition}{section}
\newtheorem{lemma}{Lemma}
\numberwithin{lemma}{section}
\newtheorem{corollary}{Corollary}
\numberwithin{corollary}{section}

\theoremstyle{definition}
\newtheorem{example}{Example}
\numberwithin{example}{section}

\theoremstyle{remark}
\newtheorem{remark}{Remark}
\numberwithin{remark}{section}


\providecommand{\BBb}[1]{{\mathbb{#1}}}

\providecommand{\cal}[1]{{\mathcal{#1}}}

\newcommand{\Bbar}{\overline{B}}
\newcommand{\C}{{\BBb C}}
\newcommand{\dual}[2]{\langle\,#1,\,#2\,\rangle}

\newcommand{\fracc}[2]{{
                \textstyle\frac{#1}{\raise 1pt\hbox{$\scriptstyle #2$}}}}

\newcommand{\fracnp}{\fracc np}
\newcommand{\fracci}[2]{{\frac{#1}{\raise 1pt\hbox{$\scriptscriptstyle #2$}}}}
\newcommand{\fracpi}{\fracci1p}

\newcommand{\im}{\operatorname{i}}
\newcommand{\loc}{\operatorname{loc}}

\newcommand{\mlap}{-\!\operatorname{\Delta}}
\newcommand{\nrm}[2]{\|#1\|_{#2}}
\newcommand{\Nrm}[2]{\bigl\|#1\bigr\|_{#2}}

\newcommand{\order}{\operatorname{order}}
\newcommand{\op}[1]{\operatorname{#1}}
\newcommand{\OP}{\operatorname{OP}}
\newcommand{\N}{\BBb N}
\newcommand{\R}{{\BBb R}}
\newcommand{\Rn}{{\BBb R}^{n}}
\newcommand{\supp}{\operatorname{supp}}

\renewcommand{\hat}[1]{\overset{{\scriptscriptstyle \wedge}}{#1}}

%

%
\addtolength{\textwidth}{1em}
\begin{document}

\title{{Pointwise estimates of pseudo-differential operators}}
\author{Jon Johnsen}
\thanks{Supported by 
the Danish Council for Independent Research, Natural Sciences (Grant No. 09-065927)%
\\[4\jot]
{\tt Appeared in Journal of pseudo-differential operators and their applications,
{\bf 2\/} (2011), pp.~377--398}%
}

\address{Departments of Mathematical Sciences, Aalborg University,
Fredrik Bajers Vej 7G,
DK-9220 Aalborg {\O}st, Denmark}
\email{jjohnsen@math.aau.dk}           
\subjclass[2000]{35S05, 47G30}
\keywords{Pointwise estimates, factorisation inequality, maximal function, symbol factor}

\begin{abstract}
As a new technique it is shown how general pseudo-differential operators 
can be estimated at
arbitrary points in Euclidean space when acting on functions $u$ with
compact spectra. The estimate is a factorisation inequality, in
which one factor is the Peetre--Fefferman--Stein maximal function of $u$,
whilst the other is a symbol factor carrying the whole information on the
symbol.
The symbol factor is estimated in terms of the spectral radius of $u$, so that
the framework is well suited for Littlewood--Paley analysis.
It is also shown how it gives easy access to results on polynomial bounds
and estimates in $L_p$, including a new result for type $1,1$-operators that
they are always bounded on $L_p$-functions with compact spectra.
\end{abstract}
\enlargethispage{2\baselineskip}\thispagestyle{empty}
\maketitle

\section{Introduction}  \label{intr-sect}
The aim of this note is to show how one can estimate 
a pseudo-differential operator at an arbitrary point $x\in \Rn$.
These pointwise estimates are applied to mapping properties and continuity results, 
in order to illustrate their efficacy.

\bigskip

The central theme is to show for a general symbol 
$a(x,\eta)$, with the
associated operator $a(x,D)u(x)=(2\pi)^{-n}\int e^{\im
x\cdot\eta}a(x,\eta)\hat u(\eta)\,d\eta$, 
that for distributions with compact spectra, i.e.\ $u\in {\cal F}^{-1}{\cal E}'(\Rn)$,
\begin{equation}
  |a(x,D)u(x)|\le c\cdot u^*(x) \quad\text{for every}\quad x\in \Rn.
  \label{au*-eq}
\end{equation}
Here $u^*$ denotes the Peetre-Fefferman-Stein maximal function
\begin{equation}
  u^*(x)=u^*(N,R;x)=\sup_y\frac{|u(x-y)|}{(1+|Ry|)^N}
        =\sup_y\frac{|u(y)|}{(1+R|x-y|)^N},
  \label{u*-eq}
\end{equation}
where $N>0$, $R>0$ are parameters;
$R$ so large that
$x\in \supp\hat u$ implies $|x|\le R$. 

One obvious advantage of proving \eqref{au*-eq} in terms of 
\eqref{u*-eq} is the \emph{immediate} $L_p$-estimate
\begin{equation}
  \int |a(x,D)u(x)|^p\,dx \le c^p\int |u^*(x)|^p\,dx \le c^pC_p\nrm{u}{p}^p,
 \qquad 1\le  p\le\infty,
  \label{iLp-eq}
\end{equation}
where the last step is to invoke the \emph{maximal inequality} 
\begin{equation}
  \int_{\Rn} |u^*(x)|^p\,dx \le C_p\int_{\Rn}|u(x)|^p\,dx,
  \qquad u\in L_p\cap{\cal F}^{-1}{\cal E}'.
  \label{max-ineq'}
\end{equation}
This estimate of the non-linear map $u\mapsto u^*$
has for $Np>n$ been known  since 1975 from a work of Peetre \cite{Pee75TL}, who
estimated $u^*(x)$ by the Hardy-Littlewood maximal function 
$Mu(x)=\sup_{\rho>0}c_n\rho^{-n}\int_{|y|<\rho} |u(x+y)|\,dy$
in order to invoke $L_p$-boundedness of the latter.
A significantly simpler proof is given below.

It is remarkable that little attention has been paid over
the decades to pointwise estimates like \eqref{au*-eq}\,---\,in comparison Peetre's
proof of \eqref{max-ineq'} quickly got a central role in the theory of function
spaces; cf.\ \cite[1.4.1]{T2}. 
However, to the
author's knowledge, there has only been a similar attempt by Marschall, who in
his thesis \cite{Mar85} suggested to estimate $a(x,D)u(x)$ in terms of $Mu$; this was
followed up in a series of papers, e.g.\ \cite{Mar91,Mar95,Mar96}, where the technique was used
to derive boundedness under weak assumptions in spaces based on $L_p$
(functions and symbols subject to Besov and Lizorkin--Triebel conditions).

In the present paper the point of view is quite different. First of all because $u^*$  is
rather easier to treat and work with than $Mu$. Secondly, the aim is to
explain how pointwise estimates in terms of $u^*$ will simplify well-known
topics such as $L_p$-estimates and Littlewood--Paley analysis of $a(x,D)$.

So as a main result here, \eqref{au*-eq} is also shown to be 
straightforward to obtain;
cf.\ Theorem~\ref{Fau*-thm}--\ref{Fa-thm} below. Indeed, the
constant $c$ in \eqref{au*-eq} is just an upper bound for the \emph{symbol
factor} $F_a(x)$, which is a continuous, bounded function carrying the entire
information of the symbol in the \emph{factorisation}
inequality 
\begin{equation}
  |a(x,D)u(x)|\le F_{a}(x) u^*(x),\qquad u\in{\cal F}^{-1}{\cal E}'(\Rn).
  \label{fact-ineq}
\end{equation}
As $F_a(x)$ only depends vaguely
on $u$ (cf.\ Section~\ref{pest-sect}), this gives a somewhat surprising decoupling. 

The inequality is well suited for Littlewood--Paley analysis of $a(x,D)$ as
described in Section~\ref{corona-sect}.
The set-up there has recently been exploited by the author 
\cite{JJ10tmp} in proofs of
fundamental results for pseudo-differential operators of type $1,1$; this is
briefly reviewed in Section~\ref{t11-sect}, where also \eqref{iLp-eq} is
given as a new theorem for type $1,1$-operators.

\section{The Peetre--Fefferman--Stein maximal function}   \label{PFS-sect}
This section explores the definition of $u^*(x)$ in \eqref{u*-eq}, in lack
of a reference. It also gives a straightforward proof of the maximal inequality
\eqref{max-ineq'}. 

\bigskip

For the reader's sake a few easy facts are recalled first.
To show that $u^*(x)$ is a `slowly' varying function, note that
\begin{equation}
  \frac{|u(x-z)|}{(1+|Rz|)^N}=
  \frac{|u(y-(z+y-x))|}{(1+R|z+y-x|)^N}\cdot
  \frac{(1+R|z+y-x|)^N}{(1+|Rz|)^N},
\end{equation}
so the inequality $1+|x+y|\le1+|x|+|y|+|x||y|=(1+|x|)(1+|y|)$ gives
\begin{equation}
  u^*(x)\le u^*(y)(1+R|x-y|)^N.
  \label{u*xy-ineq}
\end{equation}
Therefore  $u^*(x)$ is finite at every $x\in \Rn$ if it is so at one point $y$.
So either $u^*(x)=\infty $ on the entire $\Rn$, 
or \eqref{u*xy-ineq} implies that $u^*(x)$ is 
continuous on $\Rn$, i.e.\ $u^*\in C(\Rn)$. 

Finiteness is for large $N$ implied by the (often imposed)
assumption that $u\in {\cal S}'(\Rn)$ should have its
spectrum in the closed ball $\Bbar(0,R)$ of radius $R$, ie
\begin{equation}
  \supp{\cal F}u\subset \Bbar(0,R).
  \label{FuBR-eq}
\end{equation}
Indeed, then $|u(x)|\le c_R(1+R|x|)^m$ by the Paley--Wiener--Schwartz theorem, when
$m$ is the order of $\hat u$. So $N\ge m$ gives
$u^*(N,R;0)\le c_R$, hence $u^*(N,R;x)<\infty$ 
for all $x\in \Rn$.

In any case it is clear that $u\mapsto u^*$ is subadditive, i.e.\ $(u+v)^*\le u^*+v^*$, whence
\begin{equation}
  |u^*(N,R;x)-v^*(N,R;x)|\le (u-v)^*(N,R;x).
  \label{uvmu-eq}
\end{equation}
Hence $u\mapsto u^*$ is Lipschitz continuous on $L_\infty (\Rn)$ with
constant $1$, as
it is a shrinking map there, i.e.\ $\nrm{u^*}{\infty }\le \nrm{u}{\infty }$.
With respect to the H{\"o}lder seminorm
\begin{equation}
 |u|_\sigma:= \sup_{x\ne y}|u(x)-u(y)|/|x-y|^{\sigma},\qquad 0<\sigma<1 ,
\end{equation}
it is also a shrinking map, for \eqref{uvmu-eq} gives that
\begin{equation}
  |u^*(x+h)-u^*(x)|\le \sup_{\Rn} \frac{|u(x+h+\cdot )-u(x+\cdot )|}{(1+R|\cdot |)^N}
  \le |u|_\sigma |h|^\sigma.
\end{equation}
Therefore $|u^*|_\sigma\le |u|_\sigma$ as claimed. In particular one has

\begin{proposition}
  \label{Cs*-prop}
The map $u\mapsto u^*$ is for all $N, R>0$ a shrinking map on
the H{\"o}lder space $C^\sigma(\Rn)$, $0<\sigma< 1$, defined by 
finiteness of the norm $|u|_{\sigma}^*=\sup|u|+|u|_\sigma$.
\end{proposition}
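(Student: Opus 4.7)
The plan is to assemble the proposition directly from the two shrinking estimates already established in the preceding paragraphs, since the Hölder norm $|u|_\sigma^* = \sup|u| + |u|_\sigma$ is the sum of the sup norm and the Hölder seminorm, and each summand is separately controlled.

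First I would verify that $u \in C^\sigma(\Rn)$ forces $u^* \in C^\sigma(\Rn)$ and not merely a Lipschitz-type bound. The boundedness $\nrm{u^*}{\infty} \le \nrm{u}{\infty}$ was noted as a consequence of subadditivity together with the trivial estimate $u^*(x) \le \nrm{u}{\infty}$ obtained by taking the supremum in the definition \eqref{u*-eq}. The seminorm bound $|u^*|_\sigma \le |u|_\sigma$ follows from \eqref{uvmu-eq} applied to the pair $u(\cdot+h)$ and $u(\cdot)$, as already displayed. In particular $u^*$ is finite everywhere (so continuity from \eqref{u*xy-ineq} applies), and it has a finite Hölder seminorm.

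Combining these gives
\begin{equation*}
  |u^*|_\sigma^* = \nrm{u^*}{\infty} + |u^*|_\sigma \le \nrm{u}{\infty} + |u|_\sigma = |u|_\sigma^*,
\end{equation*}
which is exactly the shrinking statement. No further argument is needed; the proposition is a packaging of the two inequalities derived just above, in a form convenient for reference.

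The only mildly delicate point\,---\,and it is hardly an obstacle\,---\,is making sure the sup-norm shrinkage follows cleanly in the $u^*$ formulation: one must read off from $u^*(x) = \sup_y |u(y)|/(1+R|x-y|)^N$ that the supremum is bounded by $\nrm{u}{\infty}$ (choose the ``numerator bound'') and hence the inequality persists after taking the outer supremum over $x$. With that in hand, the proposition is immediate.
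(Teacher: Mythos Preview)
Your proposal is correct and follows exactly the paper's approach: the proposition is stated immediately after the two shrinking estimates $\nrm{u^*}{\infty}\le\nrm{u}{\infty}$ and $|u^*|_\sigma\le|u|_\sigma$ are derived, and is simply their combination into a single statement for the norm $|u|_\sigma^*=\sup|u|+|u|_\sigma$. There is nothing to add.
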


A main case is when $u$ is in $ L_p(\Rn)$, $1\le
p\le\infty$. For $p<\infty $ one has that $u^*\equiv \infty $ for
$u$ equal to $e^{|x|}$ times the characteristic function of 
$\bigcup_{k\in\N} B(ke_1, e^{-(k+1)2p})$.
Such growth is impossible on the subspace of functions fulfilling the spectral
condition \eqref{FuBR-eq}, so this is imposed henceforth.

As an a priori analysis of this case, the Nikolski\u\i--Plancherel--Polya
inequality implies $u\in L_p\cap L_\infty$, for it states that if $u\in L_p$
and \eqref{FuBR-eq} holds, then 
\begin{equation}
  \nrm{u}{r}\le cR^{\tfrac{n}{p}-\tfrac{n}{r}}\nrm{u}{p}
  \quad\text{for}\quad p<r\le\infty.
  \label{urp-ineq}
\end{equation}
For its proof one can take 
an auxiliary function $\psi\in {\cal S}(\Rn)$ so that ${\cal F}\psi(\xi)=0$
for $|\xi|\ge2$ 
and ${\cal F} \psi(\xi)=1$ around $B(0,1)$, for then $u=R^n\psi(R\cdot)*u$,
and \eqref{urp-ineq} follows from this identity at once by
the Hausdorff--Young inequality $\nrm{f*g}{r}\le \nrm{f}{p}\nrm{g}{q}$,
where $\tfrac{1}{p}+\tfrac{1}{q}=1+\tfrac{1}{r}$; 
hereby $c=\nrm{\psi}{q}$, that only depends on $p$, $r$ and $n$.

To complete the picture, \eqref{urp-ineq} extends as it stands to the range
$0<p<r\le\infty$, provided  $u$ is given in $L_p\cap{\cal S}'(\Rn)$ with 
$\supp{\cal F}u\in \Bbar(0,R)$; cf.\ \cite[1.4.1(ii)]{T2}. 
The direct treatment in \cite{JoSi07} shows that one can take 
$c=\nrm{\psi}{\infty }^{\fracpi-\fracci1r}$ for $0<p\le 1$.
(For $0<p<1$, the set $L_p\cap {\cal S}'$ itself consists of the
$u\in L_1^{\loc}\cap {\cal S}'$ fulfilling $\int_{\Rn}|u|^p\,dx<\infty $, that
\emph{per se}
requires stricter smallness than $L_1$ for $|x|\to\infty $ but gives a global
condition on the singularities in the possibly non-compact region where
$|u(x)|>1$.) 

By \eqref{urp-ineq}, pointwise estimates of $u^*(x)$ hold for $L_p$-functions with compact spectra:

\begin{lemma}   \label{uu*-lem}
For every $u\in L_p\cap {\cal S}'(\Rn)$, $0<p\le
\infty $ with $\supp{\cal F}u\subset \Bbar(0,R)$, it holds true on $\Rn$ that
\begin{equation}
  |u(x)|\le u^*(N,R;x)
  \le \nrm{u}{\infty}<\infty  \quad\text{for every}\quad N>0.
  \label{uu*-ineq}
\end{equation}
\end{lemma}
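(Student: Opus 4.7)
The approach is elementary since the lemma is essentially a bookkeeping consequence of the definition \eqref{u*-eq} together with the already-established Nikolski\u\i--Plancherel--Polya inequality \eqref{urp-ineq}. I would treat the three assertions in the chain $|u(x)| \le u^*(N,R;x) \le \nrm{u}{\infty} < \infty$ separately, moving from left to right.

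First, the left inequality $|u(x)| \le u^*(N,R;x)$ drops out of the defining supremum \eqref{u*-eq} by the single specialisation $y = x$: the denominator collapses to $1$, leaving exactly $|u(x)|$. This step uses neither the $L_p$-membership nor the spectral condition, and it holds for every $N,R>0$.

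Next, for the middle inequality $u^*(N,R;x) \le \nrm{u}{\infty}$, I would observe that $(1+R|x-y|)^N \ge 1$ for every $y\in\Rn$, so each quotient inside the supremum in \eqref{u*-eq} is bounded by $|u(y)| \le \nrm{u}{\infty}$; taking the supremum over $y$ preserves the bound. Again, the spectral hypothesis plays no role here.

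Finally, the only nontrivial point is the finiteness $\nrm{u}{\infty} < \infty$, and this is precisely where the combined hypotheses $u\in L_p$ and $\supp {\cal F}u \subset \Bbar(0,R)$ are needed. For $p=\infty$ this is of course immediate. For $0<p<\infty$ I would apply the Nikolski\u\i--Plancherel--Polya inequality \eqref{urp-ineq} with $r=\infty$ to conclude
\[
  \nrm{u}{\infty}\le cR^{\tfrac{n}{p}}\nrm{u}{p}<\infty,
\]
where the validity of \eqref{urp-ineq} for the whole range $0<p\le\infty$ has already been recorded just before the statement. There is no real obstacle in the proof; the only thing that requires a moment's care is to make sure one invokes \eqref{urp-ineq} in its version valid down to $p<1$, rather than only the Hausdorff--Young route that gives it for $p\ge 1$.
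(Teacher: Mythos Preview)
Your proof is correct and follows essentially the same approach as the paper: invoke \eqref{urp-ineq} with $r=\infty$ for the finiteness of $\nrm{u}{\infty}$, then read off both inequalities directly from the definition \eqref{u*-eq}. The only cosmetic difference is that you specialise to $y=x$ in the second form of \eqref{u*-eq} whereas the paper takes $y=0$ in the first form, which amounts to the same thing.
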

\begin{proof}
With $r=\infty $ in \eqref{urp-ineq}
it follows that $\nrm{u}{\infty }$ is finite; and
it dominates $u^*(x)$ as stated, by the definition of $u^*$ in
\eqref{u*-eq}. Taking $y=0$ there yields $|u(x)|\le u^*(x)$.
\end{proof}

Note that ${\cal F}L_p\subset {\cal D}^{\prime\,k}$ for 
the least integer $k>\tfrac{n}{2}-\fracnp$ if $p>2$, 
cf.\ \cite[Sec.~7.9]{H},
so the Paley--Wiener--Schwartz theorem 
would give the poor condition
$N\ge [\tfrac{n}{2}-\fracnp]+1$ for finiteness of $u^*$.

For convenience in the following, the auxiliary function $f_N$ is
introduced as
\begin{equation}
 f_N(z)=(1+|z|)^{-N}.
\label{fuN-id}
\end{equation}

\begin{example}   \label{conv-exmp}
As is well known, $u^*$ is useful (when finite) for pointwise control of
convolutions, since e.g.\ the assumptions 
$\varphi\in {\cal S}$, $ u\in {\cal F}^{-1}{\cal E}'$ clearly give
\begin{equation}
  |\varphi*u(x)|\le \int (1+R|y|)^N|\varphi(y)|\frac{|u(x-y)|}{(1+R|y|)^N}\,dy
  \le c u^*(N,R;x).
  \label{conv-est}
\end{equation}
\end{example}

\begin{example}  \label{|u|*f-exmp}
Conversely $u^*(x)$ may be controlled by convolving $|u|$ with the
above function $f_N$; cf.\ \eqref{fuN-id}.
Hereby cases with $N>n$ are particularly simple as one has 
\begin{equation}
 u^*(N,R;x)\le C_N R^n f_N(R\cdot)*|u|(x).
  \label{u*-ineq}
\end{equation}
Indeed, when $u\in L_p$, $1\le p\le\infty$ with $\supp{\cal F}u\subset \Bbar(0,R)$
the compact spectrum of $u$ can be exploited by taking
$\psi$ as after \eqref{urp-ineq} 
above, which gives $u=R^n\psi(R\cdot )*u$.
Thence
\begin{equation}
  \begin{split}
  |u(y)|(1+R|x-y|)^{-N} &\le 
  \int \frac{R^n|\psi(R(y-z))u(z)|}{(1+R|x-y|)^N}\,dz
\\
  &\le 
  \int (1+R|y-z|)^N|\psi(R(y-z))|\frac{R^n|u(z)|}{(1+R|x-z|)^N}\,dz  
  \end{split}
  \label{uy-ineq}
\end{equation}
by using
$(1+R|x-y|)(1+R|y-z|)\ge(1+R|x-z|)$ in the denominator. This gives
\begin{equation}
  u^*(x)\le C_{N}\int \frac{R^n |u(z)|}{(1+R|x-z|)^N}\,dz,
  \label{CN-ineq}
\end{equation}
where $C_{N}:=\sup (1+|v|)^N|\psi(v)|<\infty$ because $\psi\in {\cal S}$.
This shows the claim in \eqref{u*-ineq}.
\end{example}

As an addendum to Example~\ref{|u|*f-exmp}, a basic estimate gives 
in \eqref{u*-ineq} that for $p\ge 1$
\begin{equation}
  \nrm{u^*}{p} \le C_N \nrm{R^nf_N(R\cdot)*|u|}{p}
 \le C_N \int (1+|z|)^{-N}\,dz\cdot \nrm{u}{p}.
  \label{u*N-est}
\end{equation}
So for $N>n$ this short remark proves a special case of 
the maximal inequality \eqref{max-ineq'}.

However, $N>n$ is far from an optimal assumption for \eqref{max-ineq'}. 
But a few changes give the improvement $N>n/p$;
and also every $p\in \,]0,\infty]$ can be treated using the
Nikolski\u\i--Plancherel--Polya inequality \eqref{urp-ineq}. 

The idea is to utilise the powerful pointwise estimate in
\eqref{u*-ineq},
where e.g.\ both sides can be integrated over $\Rn$
(unlike \eqref{uu*-ineq}). But first it is generalised thus:
 
\begin{proposition}   \label{u*p-prop}
If $u\in {\cal S}'$, $\supp{\cal F}u\subset\Bbar(0,R)$ 
and $N,p\in \,]0,\infty [\,$ are arbitrary, then
\begin{equation}
 u^*(N,R;x)\le C_{n,N,p} 
  \big(\int R^n \frac{|u(x-z)|^p}{(1+R|z|)^{Np}}\,dz\big)^{\fracpi}
  =C_{n,N,p}\big( R^n f_N^p(R\cdot)*|u|^p(x)\big)^{\fracpi}
  \label{u*p-ineq}
\end{equation}
for a constant $C_{n,N,p}$ depending only on $n$, $N$ and $p$.
\end{proposition}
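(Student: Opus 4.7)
The plan is to adapt Peetre's classical trick: combine the convolution identity $u=R^n\psi(R\cdot)*u$ from Example~\ref{|u|*f-exmp} with a Bernstein-type bound on $\nabla u$ and an averaging argument over a small ball. As a preliminary reduction I would dispose of the case $u^*(N,R;x)=\infty$: if the right-hand side of \eqref{u*p-ineq} is itself infinite there is nothing to prove, whereas if it is finite, the Paley--Wiener--Schwartz bound $|u(z)|\le c_1(1+|z|)^k$ forces $N>k+\fracnp$ (for integrability at infinity), and this in turn renders $u^*(N,R;x)$ finite as the supremum of a decaying integrand.

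Setting $M=u^*(N,R;x)$, I would pick $y\in\Rn$ realising $|u(y)|\ge(M/2)(1+R|x-y|)^N$. Differentiating the convolution identity gives $\nabla u=R^{n+1}(\nabla\psi)(R\cdot)*u$, and repeating the calculation \eqref{uy-ineq}--\eqref{CN-ineq} verbatim with $\nabla\psi$ in place of $\psi$ yields the Bernstein-type bound
\begin{equation*}
|\nabla u(w)|\le C_1RM(1+R|x-w|)^N,\qquad w\in\Rn,
\end{equation*}
with $C_1=\int|\nabla\psi(v)|(1+|v|)^N\,dv<\infty$.

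Next, for $z$ in the ball $B(y,\epsilon/R)$ with $\epsilon\in\,]0,1]$, the doubling $(1+R|x-w|)\le 2(1+R|x-y|)$ together with the mean-value inequality gives
\begin{equation*}
|u(z)|\ge |u(y)|-\epsilon\cdot 2^NC_1M(1+R|x-y|)^N,\qquad z\in B(y,\epsilon/R).
\end{equation*}
The single choice $\epsilon=\min(1,1/(2^{N+2}C_1))$ absorbs the gradient error and leaves $|u(z)|\ge(M/4)(1+R|x-y|)^N$ throughout $B(y,\epsilon/R)$. Integrating the $p$-th power and using the same doubling $(1+R|x-z|)\le 2(1+R|x-y|)$ on the ball yields $M^p\le C_{n,N,p}\cdot R^nf_N^p(R\cdot)*|u|^p(x)$ after rearrangement, which is the proposition after extracting the $p$-th root.

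The main obstacle is the self-referential character of the Bernstein step: it bounds $|\nabla u|$ by a multiple of the very quantity $M$ one aims to estimate. This is why the a~priori finiteness of $u^*(N,R;x)$ has to be secured before the mean-value argument is deployed, and why $\epsilon$ must be calibrated in terms of $n$, $N$ and the fixed $\psi$ only so as to absorb a definite fraction $1/4$ of the gradient contribution. Everything else is routine bookkeeping with the triangle-type inequality $(1+R|x-z|)\le(1+R|x-w|)(1+R|w-z|)$ already used in Example~\ref{|u|*f-exmp}.
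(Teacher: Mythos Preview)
Your approach via a Bernstein bound and the sub-mean-value trick is Peetre's classical route and is genuinely different from the paper's proof. The paper instead departs from \eqref{uy-ineq} and, for $1\le p<\infty$, simply applies H{\"o}lder's inequality to that convolution integral (the $\psi$-factor into $L_{p'}$, the weighted $|u|$-factor into $L_p$); for $0<p\le 1$ it invokes the Nikolski\u\i--Plancherel--Polya inequality \eqref{urp-ineq} on $z\mapsto\psi(R(y-z))u(z)$, whose spectrum lies in $\Bbar(0,3R)$, to pass from the $L_1$-norm to the $L_p$-norm. This is shorter and, crucially, bounds $|u(y)|/(1+R|x-y|)^N$ \emph{directly} by the right-hand side of \eqref{u*p-ineq}, so no a~priori finiteness of $u^*$ is ever needed.

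That last point is where your reduction has a real gap. The claim ``right-hand side finite forces $N>k+\fracnp$'' is false: the Paley--Wiener--Schwartz exponent $k$ is only an \emph{upper} bound on the growth of $u$, so finiteness of the integral says nothing about $N$ versus $k$ (take any $u\in{\cal S}$ with spectrum in $\Bbar(0,R)$; the integral is finite for every $N>0$, however small). Since your Bernstein step $|\nabla u(w)|\le C_1RM(1+R|x-w|)^N$ is vacuous when $M=\infty$, the argument as written is incomplete. A standard repair is to run the Bernstein--averaging argument for $u_\varepsilon(x)=u(x)\check\rho(\varepsilon x)$ with $\rho\in C^\infty_0$, $\rho(0)=1$ (so that $u_\varepsilon\in{\cal S}$ has finite maximal function and spectrum in $\Bbar(0,R+C\varepsilon)$) and then let $\varepsilon\to0$ via Fatou's lemma. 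Once this is in place your argument is correct and handles all $p>0$ uniformly, without the case split; the paper's route buys directness and avoids the approximation layer at the cost of that split.
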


\begin{proof}
As above $u(x)=R^n\psi(R\cdot )*u=\dual{u}{R^n\psi(R(x-\cdot ))}$, which can be
written as an integral since $(1+|y|)^{-k}u(y)$ is in $L_1(\Rn)$ for a large
$k$; therefore \eqref{uy-ineq} holds.
Suppose now that the right-hand side of \eqref{u*p-ineq}
is finite.

For $1\le p<\infty $ one can simply use H{\"o}lder's inequality for
$p+p'=p'p$ in the passage from \eqref{uy-ineq} to \eqref{CN-ineq}; then
$C_{n,N,p}=(\int (1+|z|)^{Np'}|\psi(z)|^{p'}\,dz)^{1/p'}$ gives
\eqref{u*p-ineq}.

If $0<p\le 1$ the $L_1$-norm with respect to $z$ in \eqref{uy-ineq}
can be estimated by the $L_p$-norm, according to 
\eqref{urp-ineq}, for the Fourier transform
of $z\mapsto \psi(R(y-z))u(z)$ is supported by $\Bbar(0,3R)$.
Invoking the specific constant in \eqref{urp-ineq} and proceeding as before,
this gives 
\begin{equation}
  \begin{split}
  u^*(x)^p&\le \int\sup_{y\in \Rn}
 \frac{\nrm{\psi}{\infty }^{(\tfrac{1}{p}-1)p}(3R)^{(\tfrac{n}{p}-n)p}C_N^p 
       R^{np}|u(z)|^p}{(1+R|x-y|)^{Np}(1+R|y-z|)^{Np}}\,dz
\\
  &\le C_{n,N,p}^p \int\frac{R^{n}|u(x-z)|^p}{(1+R|z|)^{Np}}\,dz,
  \end{split}
\end{equation}
where $C_N$ is as in \eqref{CN-ineq} and now
$C_{n,N,p}=C_N 3^{n/p}\nrm{\psi}{\infty }^{(1-p)/p}$.
\end{proof}

These elementary considerations give a short proof, in the style of
\eqref{u*N-est}, of the following 
important theorem on the $L_p$-boundedness of the maximal operator
$u\mapsto u^*$.

\begin{theorem}
  \label{max-thm}
When $0<p\le\infty$ and $N>n/p$, then there is a constant
$C'_{n,N,p}>0$ such that the maximal function 
$u^*(N,R;x)$ in \eqref{u*-eq} fulfils
\begin{equation}
  \nrm{u^*(N,R;\cdot)}{p}\le C'_{n,N,p}\nrm{u}{p}
  \label{max-ineq}
\end{equation}
for every $u\in L_p(\Rn)\cap{\cal S}'(\Rn)$ in the closed subspace with 
$\supp{\cal F}u\subset \Bbar(0,R)$. On this subspace there is Lipschitz
continuity
\begin{equation}
  \nrm{u^*(N,R;\cdot )-v^*(N,R;\cdot )}{p}\le C'_{n,N,p} \nrm{u-v}{p}.
\end{equation}
\end{theorem}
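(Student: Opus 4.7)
The plan is to deduce the maximal inequality \eqref{max-ineq} directly from Proposition~\ref{u*p-prop}, handling the endpoint $p=\infty$ separately, and then derive the Lipschitz estimate from the subadditivity of $u\mapsto u^*$ recorded in \eqref{uvmu-eq}.

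For $p=\infty$, Lemma~\ref{uu*-lem} immediately yields $\nrm{u^*}{\infty}\le\nrm{u}{\infty}$, so one may take $C'_{n,N,\infty}=1$. For $0<p<\infty$, I would invoke Proposition~\ref{u*p-prop} \emph{with the same exponent $p$}, obtaining the pointwise bound
\begin{equation*}
 u^*(N,R;x)^p\le C_{n,N,p}^p\,R^n f_N^p(R\cdot)*|u|^p(x).
\end{equation*}
Integrating over $\Rn$ and swapping the two integrations on the right by Tonelli's theorem, the inner integral reduces to $\int_{\Rn}R^n(1+R|y|)^{-Np}\,dy$, which is finite precisely because of the hypothesis $Np>n$; the substitution $w=Ry$ then removes the $R$-dependence, producing a constant that depends only on $n$, $N$, $p$. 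The remaining outer integral contributes $\nrm{u}{p}^p$, so \eqref{max-ineq} follows with $C'_{n,N,p}=C_{n,N,p}\bigl(\int(1+|w|)^{-Np}\,dw\bigr)^{1/p}$.

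For the Lipschitz continuity, the key point is that $\supp{\cal F}(u-v)\subset\Bbar(0,R)$ whenever both $u$ and $v$ have this property, so $u-v$ lies in the same closed subspace to which the first part of the theorem applies. Combining this with the subadditivity inequality $|u^*(x)-v^*(x)|\le(u-v)^*(x)$ from \eqref{uvmu-eq} and taking $L_p$-norms yields the Lipschitz estimate with the very same constant $C'_{n,N,p}$.

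There is no genuine obstacle: Proposition~\ref{u*p-prop} has already absorbed the delicate step (producing a pointwise $L_p$-averaged bound valid for \emph{arbitrary} $p>0$ via the Nikolski\u\i--Plancherel--Polya inequality), and what remains is a one-line application of Fubini--Tonelli together with a transparent use of subadditivity. The only small piece of bookkeeping worth noting is that the specific constants produced by Proposition~\ref{u*p-prop} in the two sub-cases $0<p\le1$ and $1\le p<\infty$ are already $R$-independent, so no further scaling argument is required.
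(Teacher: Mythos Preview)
Your proposal is correct and follows essentially the same route as the paper: the case $p=\infty$ via Lemma~\ref{uu*-lem}, the case $0<p<\infty$ by raising \eqref{u*p-ineq} to the $p$th power and integrating with Fubini--Tonelli (using $Np>n$ for finiteness of $\int(1+|w|)^{-Np}\,dw$), and the Lipschitz estimate from \eqref{uvmu-eq}. The only minor addition in the paper is an explicit remark that $u^*$ is measurable, which follows from its continuity once finiteness is known; you have this implicitly since Lemma~\ref{uu*-lem} already gives finiteness.
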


\begin{proof}
Lemma~\ref{uu*-lem} yields that $u^*$ is finite 
and consequently continuous as noted after \eqref{u*xy-ineq}, 
hence measurable.
The case $p=\infty $ then follows at once from the lemma.
For $0<p<\infty $ one can integrate both sides of
\eqref{u*p-ineq}, which by Fubini's theorem
yields
\begin{equation}
  \int |u^*(x)|^{p}\,dx  \le C_{n,N,p}^p
   \iint\frac{|u(x-z/R)|^{p}}{(1+|z|)^{Np}} \,dz\,dx
   =C^p\int |u(x)|^{p}\,dx
  \end{equation}
for $C^p=C_{n,N,p}^p\int(1+|z|)^{-Np}\,dz$. Since $Np>n$ 
this gives $u^*\in L_p$ and \eqref{max-ineq}.
Now the Lipschitz property follows by integration on both sides of
\eqref{uvmu-eq}.
\end{proof}

Among the further properties there is a Bernstein inequality for $u^*$,
which states that the maximal function of $u$
controls that of the derivatives $\partial^\alpha u$.

\begin{proposition}  \label{Bern-prop}
The estimate $(\partial^\alpha u)^*(N,R;x)\le C^{(\alpha)}_N R^{|\alpha|} u^*(N,R;x)$
is valid when
$\supp{\cal F}u\subset \Bbar(0,R)$, with a constant $C^{(\alpha)}_N$ independent of $u$, $R$.
\end{proposition}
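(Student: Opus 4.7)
The plan is to reuse the convolution representation that underlies Example~\ref{|u|*f-exmp} and Proposition~\ref{u*p-prop}. First I would pick $\psi\in {\cal S}(\Rn)$ with ${\cal F}\psi=1$ on $\Bbar(0,1)$ and ${\cal F}\psi=0$ for $|\xi|\ge 2$, so that the spectral condition $\supp{\cal F}u\subset \Bbar(0,R)$ delivers the reproducing identity $u=R^n\psi(R\cdot)*u$. Since $\partial^\alpha u$ still has its spectrum in $\Bbar(0,R)$, differentiation under the convolution is harmless and shifts the derivatives onto $\psi$ at the cost of a factor $R^{|\alpha|}$:
\begin{equation*}
  \partial^\alpha u(y) = R^{n+|\alpha|}\int (\partial^\alpha\psi)(R(y-z))\,u(z)\,dz.
\end{equation*}

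Next I would copy the manipulation from \eqref{uy-ineq}: divide by $(1+R|x-y|)^N$ inside the integral, and use the submultiplicative bound $(1+R|x-y|)(1+R|y-z|)\ge 1+R|x-z|$ to redistribute the denominator. That produces
\begin{equation*}
  \frac{|\partial^\alpha u(y)|}{(1+R|x-y|)^N}\le R^{|\alpha|}\int R^n(1+R|y-z|)^N |(\partial^\alpha\psi)(R(y-z))|\cdot \frac{|u(z)|}{(1+R|x-z|)^N}\,dz.
\end{equation*}
The last factor is at most $u^*(N,R;x)$ for every $z$ by the very definition \eqref{u*-eq}, so it pulls out of the integral.

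To finish, I would substitute $w=R(y-z)$ in the remaining integral; the factor $R^n$ is absorbed by $dz$, the variable $y$ disappears entirely, and what is left is the dimensionless constant
\begin{equation*}
  C^{(\alpha)}_N := \int_{\Rn}(1+|w|)^N|(\partial^\alpha\psi)(w)|\,dw,
\end{equation*}
finite because $\psi\in {\cal S}$. Taking the supremum over $y$ then yields the stated inequality. No step looks genuinely hard; the only things to watch are that $\partial^\alpha u$ inherits the spectral condition in $\Bbar(0,R)$ (so that the same maximal function with the same $R$ appears on both sides) and that $C^{(\alpha)}_N$ depends only on $N$ and $\alpha$ through the fixed auxiliary $\psi$, as claimed.
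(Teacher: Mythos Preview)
Your proof is correct and follows essentially the same route as the paper: both exploit the reproducing identity $u=R^n\psi(R\cdot)*u$, differentiate to produce $R^{|\alpha|}(\partial^\alpha\psi)(R\cdot)$, and then redistribute the weight via $(1+R|x-y|)(1+R|y-z|)\ge 1+R|x-z|$ before pulling out $u^*(x)$. The only cosmetic difference is in how the leftover integral is handled: you take the $L_1$-norm $C^{(\alpha)}_N=\int(1+|w|)^N|\partial^\alpha\psi(w)|\,dw$ directly, whereas the paper inserts an extra weight $(1+R|x-y-z|)^{-N'}$ with $N'=N+n+1$, bounds $\partial^\alpha\psi$ in a weighted sup-norm, and absorbs the remaining $(1+R|\cdot|)^{-(n+1)}$ in an integral; your version is slightly cleaner.
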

While this 
is known (cf.\ \cite[1.3.1]{T2} for $R=1$), it is natural
to give the short proof here.
Writing $u(x-y)(1+R|y|)^{-N}$ in terms of the convolution $R^n\psi(R\cdot
)*u$, cf.\ Example~\ref{|u|*f-exmp},
it is straightforward to see by applying $\partial^\alpha_x$ that for $N'>0$,
\begin{equation}
  (\partial^\alpha u)^*(N,R;x)\le \sup(1+|\cdot |)^{N'}|\partial^\alpha\psi|
  \sup_y \int \frac{R^{n+|\alpha|} |u(z)|}{(1+R|y|)^N(1+R|x-y-z|)^{N'}}\,dz.
\end{equation}
For $N'=N+n+1$ a simple estimate of the denominator, 
cf.\ Example~\ref{|u|*f-exmp}, now shows Proposition~\ref{Bern-prop}
with the constant 
$C^{(\alpha)}_N=\sup(1+|\cdot |)^{N+n+1}|\partial^\alpha\psi|\int(1+|z|)^{-n-1}\,dz$.

\begin{remark}   \label{max-rem}
The maximal function $u^*$ was introduced by Peetre \cite{Pee75TL},
inspired by the non-tangential maximal function used by Fefferman and
Stein a few years earlier \cite{FeSt72}. 
It has been widely used in the theory of Besov
and Lizorkin--Triebel spaces, cf.\ \cite{T2,T3,RuSi96}, where
the boundedness in Theorem~\ref{max-thm} has been a main tool since the
1970's; cf.\ \cite[1.4.1]{T2}.
Usually its proof has been based on an estimate in terms of 
the Hardy--Littlewood maximal function,
$M_ru(x)=\sup_{\rho} (\rho^{-n}\int_{|y|<\rho} |u(x+y)|^r\,dy)^{1/r}$, 
i.e.\ for $\supp{\cal F} u\subset \Bbar(0,R)$,
\begin{equation}
 u^*(N,R;x)\le c M_r u(x),\qquad N\ge n/r. 
  \label{u*Mu-eq}
\end{equation}
When $N>n/r$ this results from Proposition~\ref{u*p-prop} by splitting the
integral ($p=r$) in regions with $2^k\le |z|\le 2^{k+1}$.
(For $N=n/r$ it was shown by Triebel, cf.\ \cite[1.3.1 ff]{T2}, by combining an inequality
for $u^*$, $(\partial_j u)^*$ and $M_r u$, due to Peetre \cite{Pee75TL}, with
the Bernstein inequality for $u^*$; cf.\ Proposition~\ref{Bern-prop}.)
This gave a proof of \eqref{max-ineq'} by combining
\eqref{u*Mu-eq} with the inequality 
$\nrm{M_ru}{p}\le c\nrm{u}{p}$ for $p>r$. 
The present proofs of Proposition~\ref{u*p-prop} and 
Theorem~\ref{max-thm} are rather simpler.
\end{remark}


\section{Preparations}    \label{prep-sect}

Notation and notions from distribution theory are the same as in
H{\"o}rmander's book~\cite{H}, unless otherwise mentioned.
E.g.\ $[t]$ denotes the largest integer $k\le t$ for $t\in\R$.
The Fourier transformation is ${\cal F}u(\xi)=\int e^{-\im x\cdot \xi}u(x)\,dx$,
which will be written as ${\cal F}_{x\to\xi}u(x,y)$ when $u$ depends on further
variables $y$. The value of $u\in\cal S'(\Rn)$ on the Schwartz function
$\psi\in\cal S(\Rn)$ is denoted by $\dual{u}{\psi}$.

\bigskip

As mentioned in the introduction the paper deals with operators given by 
\begin{equation}
  a(x,D)u=\OP(a)u(x)
  =(2\pi)^{-n}\int e^{\im x\cdot \eta} a(x,\eta){\cal F}u(\eta)\,d\eta,
 \qquad u\in {\cal S}(\Rn).
  \label{axDu-eq}
\end{equation}
Hereby the symbol $a(x,\eta)$ is $C^\infty $ on $\Rn\times \Rn$ and is
taken to fulfil the H{\"o}rmander condition of order $d\in \R$, i.e.\ for all
multiindices $\alpha$, $\beta\in \N_0^n$ there exists a constant
$C_{\alpha,\beta}>0$ such that 
\begin{equation}
  |D^\alpha_\eta D^\beta_x a(x,\eta)|\le C_{\alpha,\beta}
  (1+|\eta|)^{d-\rho|\alpha|+\delta|\beta|}.
  \label{Srd-eq}
\end{equation}
The space of such symbols is denoted by $S^d_{\rho,\delta}(\Rn\times \Rn)$
or $S^d_{\rho,\delta}$;
and $S^{-\infty }:=\bigcap S^d_{\rho,\delta}$.

The parameters $\rho$, $\delta\in [0,1]$ 
are mainly assumed to fulfil $\delta<\rho$, so that $a(x,D)$ by duality has a
continuous extension $a(x,D)\colon {\cal S}'(\Rn)\to{\cal S}'(\Rn)$.
(Type $1,1$-operators, i.e.\ $\delta=1=\rho$, are considered briefly in
Section~\ref{t11-sect} below.)
If desired the reader may specialise to the classical case
$\rho=1$, $\delta=0$.

Together with $a(x,D)$ one has the distribution kernel 
$K(x,y)={\cal F}^{-1}_{\eta\to z}a(x,\eta)\big|_{z=x-y}$ that in the usual way is seen to
be $C^\infty $ for $x\ne y$ (also for $a\in S^d_{1,1}$). It fulfils 
\begin{equation}
  \dual{a(x,D)u}{\varphi}=\dual{K}{\varphi\otimes u}
  \quad\text{for all}\quad
  u,\varphi\in {\cal S}.
\end{equation}

As preparations, two special cases are considered: 
if $u=v+v'$ is any splitting of $u\in{\cal S}+{\cal F}^{-1}{\cal E}'$ 
with $v\in {\cal S}$ and $v'\in {\cal F}^{-1}{\cal E}'$ then
\begin{equation}
  a(x,D)u= a(x,D)v+\OP(a(1\otimes \chi))v',
  \label{aFE-eq}
\end{equation}
whereby $a(1\otimes \chi)(x,\eta)=a(x,\eta)\chi(\eta)$ and $\chi\in
C^\infty_0(\Rn)$ is chosen so that $\chi=1$ holds in a neighbourhood of
$\supp{\cal F}v'$,
or just on
a neighbourhood of the smaller set
\begin{equation}
  \bigcup_{x\in \Rn}\supp a(x,\cdot ){\cal F} v'(\cdot ).
  \label{aFE-eq'}
\end{equation}
Indeed, by linearity on the left-hand side of \eqref{aFE-eq} the identity results,
for the term $a(x,D)v'$ equals $\OP(a(1\otimes \chi))v'$ if 
$v'\in {\cal F}^{-1}C^\infty_0(\Rn)$ that extends to $v'\in {\cal F}^{-1}{\cal E}'$ by
mollification of ${\cal F}v'$ since $a(1\otimes\chi)\in S^{-\infty }$.

Moreover, for every auxiliary function  $\psi\in C^\infty_0(\Rn)$ equal to
$1$ in a neighbourhood of the origin, continuity of the adjoint operation
$a\mapsto e^{\im D_x\cdot D_\eta}\bar a$ yields 
\begin{equation}
  a(x,D)u=
  \lim_{m\to\infty }\op{OP}(\psi(2^{-m}D_x)a(x,\eta)\psi(2^{-m}\eta))u.
  \label{aPsi-eq}
\end{equation}

\section{Pointwise estimates}   \label{pest-sect}
This section develops a flexible framework for discussion of
pseudo-differential operators. These are only for convenience restricted to
the classes recalled in Section~\ref{prep-sect}.

\subsection{The factorisation inequality}   \label{fact-ssect}
The simple result below introduces $u^*(x)$ as a fundamental tool
for the proof of \eqref{fact-ineq}, hence of \eqref{au*-eq}. 
It is therefore given as a theorem.

Formally the idea is to proceed as in Example~\ref{conv-exmp}, 
cf.\ \eqref{conv-est}, now departing from 
\begin{equation}
  a(x,D)u(x)=\int K(x,x-y)u(x-y)\,dy.
\end{equation}
This leads to the \emph{factorisation inequality} \eqref{axu*x-eq} below, where the
dependence on $a(x,\eta)$ is taken out in the symbol factor $F_a$, also
called the ``$a$-factor''. This is
essentially a weighted $L_1$-norm of the
distribution kernel. (The estimate shows that the case of an operator is not
much worse than that of $\varphi*u$ in Example~\ref{conv-exmp}.)

\begin{theorem}   \label{Fau*-thm}
Let $a\in S^d_{\rho,\delta}(\Rn\times\Rn)$ for $0\le \delta< \rho\le 1$. 
When $u\in {\cal S}'(\Rn)$ 
with $\supp\hat u\subset\Bbar(0,R)$, then one has 
the following pointwise estimate for all $x\in \Rn$:
\begin{equation}
  |a(x,D)u(x)|\le F_{a}(N,R;x)  \cdot u^*(N,R;x).
  \label{axu*x-eq}
\end{equation}
Hereby $u^*$ is as in \eqref{u*-eq} while $F_{a}$ 
is bounded and continuous for $x\in \Rn$ and is given in terms of
an auxiliary function $\chi\in C^\infty_0(\Rn)$ equal to
$1$ on a neighbourhood of $\supp\hat u$ as
\begin{equation}
  F_{a}(N,R;x)=\int_{\Rn}(1+R|y|)^N|
  {\cal F}^{-1}(a(x,\cdot)\chi(\cdot))|\,dy.
  \label{Fa-id}
\end{equation}
The inequality \eqref{axu*x-eq} holds for $N>0$, 
and remains true if $\chi=1$ on 
$\bigcup_{x\in \Rn} \supp a(x,\cdot )\hat u(\cdot) $.
\end{theorem}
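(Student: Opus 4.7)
The strategy is to represent $a(x,D)u(x)$ as a convergent kernel integral, then apply the $u^*$ trick exactly as in Example~\ref{conv-exmp}. First I would use the identity $a(x,\eta)\hat u(\eta)=a(x,\eta)\chi(\eta)\hat u(\eta)$, which is valid since $\chi=1$ on $\supp\hat u$ (or on the smaller set in \eqref{aFE-eq'}, which is what truly matters for the product $a(x,\cdot)\hat u$). This places the problem in the favourable regime $a\chi\in S^{-\infty}$, so that the kernel
\begin{equation*}
K_x(z):={\cal F}^{-1}_{\eta\to z}\bigl(a(x,\eta)\chi(\eta)\bigr)
\end{equation*}
is Schwartz in $z$ for each fixed $x$.

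Next I would write, for $u\in{\cal S}'\cap{\cal F}^{-1}{\cal E}'$,
\begin{equation*}
a(x,D)u(x)=\int K_x(x-y)u(y)\,dy=\int K_x(z)u(x-z)\,dz,
\end{equation*}
the integrals converging absolutely because Paley--Wiener--Schwartz makes $u$ a smooth function of polynomial growth while $K_x$ is Schwartz. (If one prefers to avoid this interchange, one can insert the approximation \eqref{aPsi-eq}, where the approximating symbols lie in $S^{-\infty}$ so the integral representation is unambiguous, and then pass to the limit using that $a(1\otimes\chi)\in S^{-\infty }$ is invariant under the $\psi(2^{-m}\eta)$-truncation in $\eta$.) The factorisation is then immediate: inserting the weight $(1+R|z|)^N$ into numerator and denominator and pulling the $u^*$-ratio out of the integral yields
\begin{equation*}
|a(x,D)u(x)|\le\Bigl(\int(1+R|z|)^N|K_x(z)|\,dz\Bigr)\,u^*(N,R;x)=F_a(N,R;x)\,u^*(N,R;x).
\end{equation*}

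It remains to verify that $F_a$ is bounded and continuous in $x$. The key observation is that on $\supp\chi$ the variable $\eta$ is confined to a fixed compact set, so the symbol estimates \eqref{Srd-eq} imply that $a(x,\eta)\chi(\eta)$ and all its $\eta$-derivatives are bounded \emph{uniformly in $x$}. Integration by parts of the Fourier integral defining $K_x$ with the identity $(1-\lap_\eta)^M e^{iz\cdot\eta}=(1+|z|^2)^M e^{iz\cdot\eta}$ then gives $(1+|z|)^{2M}|K_x(z)|\le C_M$ uniformly in $x$; choosing $2M>n+N$ makes $F_a(N,R;x)$ finite with a bound independent of $x$. Continuity of $x\mapsto F_a(x)$ follows from the smoothness of $a$ in $x$ together with dominated convergence, using the same uniform $(1+|z|)^{-2M}$ majorant.

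The main obstacle I anticipate is the rigorous justification of the kernel representation in the second step for genuinely distributional $u$; everything else is a clean Fourier-analytic computation. This obstacle is handled either by invoking Paley--Wiener--Schwartz (so that $u$ is already a function) or by approximating via \eqref{aPsi-eq} and passing to the limit, exploiting that the truncated symbols remain in $S^{-\infty}$ with kernels that are Schwartz and converge in $\mathcal{S}$ uniformly on compacts in $x$.
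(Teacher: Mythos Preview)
Your proposal is correct and follows essentially the same route as the paper: reduce to $a(1\otimes\chi)\in S^{-\infty}$, represent $a(x,D)u(x)$ as an absolutely convergent kernel integral via the Paley--Wiener--Schwartz polynomial bound on $u$, insert the weight $(1+R|z|)^N$ to factor out $u^*$, and deduce boundedness and continuity of $F_a$ by the $(1-\lap_\eta)^M$ integration-by-parts trick. The paper's only variation is that it first writes the operator as the ${\cal S}'\times{\cal S}$ pairing $\dual{u}{{\cal F}_{\eta\to y}(\tfrac{e^{\im x\cdot\eta}}{(2\pi)^n}a(x,\eta)\chi(\eta))}$ via \eqref{aFE-eq} and mollification, then observes this pairing is an integral---but this is exactly your kernel representation in different clothing.
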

\begin{proof}
Using formula \eqref{aFE-eq} with $v'=u$, and \eqref{aFE-eq'} for the last statement, 
\begin{equation}
  a(x,D)u(x)=\OP(a(1\otimes \chi))u
  =\dual{u}{{\cal F}_{\eta\to y}
        (\tfrac{e^{\im x\cdot \eta}}{(2\pi)^n}a(x,\eta)\chi(\eta))}
  \label{uFachi-eq}
\end{equation}
for the last rewriting is evident from \eqref{axDu-eq} if 
$u\in {\cal F}^{-1}C^\infty_0$ 
and follows for general $u\in {\cal F}^{-1}{\cal E}'$ by mollification of 
${\cal F}u$, since $a(1\otimes \chi)$ is in $S^{-\infty }$.

Now $a(x,\eta)\chi(\eta)$ is in $C^\infty_0(\Rn)$ for fixed $x$, so
$y\mapsto {\cal F}^{-1}_{\eta\to y}(a(1\otimes\chi))(x,x-y)$ 
decays rapidly while $u(y)$ grows
polynomially by the Paley--Wiener--Schwartz theorem. Therefore the above 
scalar product on ${\cal S}'\times{\cal S}$ is an integral,
so by the change of variables $y\mapsto x-y$,
\begin{equation}
  \begin{split}
    |a(x,D)u(x)|&=|\int u(x-y)
  {\cal F}^{-1}_{\eta\to y}(a(1\otimes\chi))(x,y)\,dy|
\\
   &\le \sup_{z\in \Rn}\frac{|u(x-z)|}{(1+R|z|)^N}
   \int(1+R|y|)^N|{\cal F}^{-1}_{\eta\to y}(a(1\otimes\chi))(x,y)|\,dy
\\
  &=u^*(x)F_a(x),
  \end{split}
\label{auFa-ineq}
\end{equation}
according to the definition of $u^*(x)$ in \eqref{u*-eq} and that of
$F_a(x)$ in \eqref{Fa-id}.
 
That $x\mapsto F_a(x)$ is bounded follows by insertion of
$1=(1+|y|^2)^{N'}(1+|y|^2)^{-N'}$ for $N'>(N+n)/2$ 
since ${\cal F}^{-1}_{\eta\to y}((1\mlap_{\eta})^{N'}[a(x,\eta)\chi(\eta)])$
is bounded with respect to $(x,y)$
because of the compact suppport of $\chi$. 
These estimates also yield continuity of the symbol factor $F_a(x)$.
\end{proof}

Disregarding the spectral radius $R$ and $N$, \eqref{axu*x-eq}
may be written concisely as
\begin{equation}
  |a(x,D)u(x)|\le F_a(x)\cdot u^* (x).
  \label{fact'-ineq}
\end{equation}
It is noteworthy that the entire
influence of the symbol lies in the $a$-factor $F_a(x)$, while
$u$ itself is mainly felt in $u^*(x)$. It is only in a vague way, 
i.e.\ through $N$ and $R$, that $u$ contributes to $F_a(x)$,
so the factorisation inequality is rather convenient.

The theorem is also valid more generally; 
e.g.\ Section~\ref{t11-sect} gives an extension to symbols of type $1,1$
(extensions to other general symbols can undoubtedly be worked out when
needed). 
To give a version for functions
without compact spectrum, ${\cal O}_M(\Rn)$ will as usual stand for the space
of slowly increasing functions, i.e.\ the $f\in C^\infty (\Rn)$ satisfying the
estimates
\begin{equation}
  |D^\alpha f(x)|\le C_\alpha(1+|x|)^{N_\alpha} .
  \label{OM-eq}
\end{equation}
Analogously to the argument after \eqref{FuBR-eq},  
$f^*(N,R;\cdot)$ is finite for $N\ge N_{(0,\dots ,0)}$, any $R>0$.
There is a factorisation inequality for such functions, 
at the expense of a sum over its derivatives:

\begin{theorem}
  \label{FaOM-thm}
When $a(x,\eta)$ is in $S^d_{\rho,\delta}(\Rn\times \Rn)$, $1\le \delta<\rho\le 1$, and
$u\in {\cal O}_M(\Rn)$ while $N'>(d+n)/2$ is a non-negative integer, 
then one has for $N$, $R>0$ that
\begin{equation}
  |a(x,D)u(x)|\le c F_a(N,R;x) 
  \sum_{|\alpha|\le 2N'} (D^\alpha u)^*(N,R;x),
  \label{fact''-ineq}
\end{equation}
where $F_a$ is defined  by \eqref{Fa-id} for $\chi(\eta)=(1+|\eta|^2)^{-N'}$
and again is in $C(\Rn)\cap L_\infty (\Rn)$.
\end{theorem}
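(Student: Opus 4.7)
The plan is to reduce the statement to Theorem~\ref{Fau*-thm} by absorbing a large power of $(1+|\eta|^2)$ into differential operators acting on $u$, at the cost of a finite sum over derivatives. Since $(1+|\eta|^2)^{N'}$ is a polynomial in $\eta$ of degree $2N'$, one has
\begin{equation*}
(1+|\eta|^2)^{N'}=\sum_{|\alpha|\le 2N'} c_\alpha\,\eta^\alpha.
\end{equation*}
With $\chi(\eta)=(1+|\eta|^2)^{-N'}$, the symbol $\tilde a(x,\eta)=a(x,\eta)\chi(\eta)$ lies in $S^{d-2N'}_{\rho,\delta}$ with order $d-2N'<-n$, and the identity $a=\sum_\alpha c_\alpha \tilde a\,\eta^\alpha$ gives, via the elementary relation $\OP(b\cdot\eta^\alpha)=\OP(b)\circ D^\alpha$ (verified on ${\cal S}$ and extended to ${\cal S}'$ by density for $\delta<\rho$),
\begin{equation*}
a(x,D)u=\sum_{|\alpha|\le 2N'}c_\alpha\,\OP(\tilde a)(D^\alpha u)\qquad\text{for } u\in{\cal O}_M\subset{\cal S}'.
\end{equation*}

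Each term $\OP(\tilde a)(D^\alpha u)$ is then to be represented by an absolutely convergent integral. Because $\tilde a\in S^{d-2N'}_{\rho,\delta}$ with $d-2N'<-n$, the inverse Fourier transform $K(x,z)={\cal F}^{-1}_{\eta\to z}\tilde a(x,\eta)$ exists classically, and a standard integration-by-parts argument with $(1\mlap_\eta)^{N''}$ shows that $K(x,z)$ is jointly continuous and has any prescribed polynomial decay in $z$, uniformly in $x\in\Rn$. Together with the polynomial growth of $D^\alpha u\in{\cal O}_M$, this makes the following integral absolutely convergent:
\begin{equation*}
\OP(\tilde a)(D^\alpha u)(x)=\int_{\Rn} K(x,z)\,D^\alpha u(x-z)\,dz,
\end{equation*}
and its agreement with the distributional action of $\OP(\tilde a)$ on $D^\alpha u$ follows by passing to the limit in the $S^{-\infty}$-regularisation~\eqref{aPsi-eq}, for which the kernel representation is elementary.

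Finally, insert the pointwise bound $|D^\alpha u(x-z)|\le(D^\alpha u)^*(N,R;x)(1+R|z|)^N$ and pull the maximal function outside the integral; what remains is exactly
\begin{equation*}
\int_{\Rn}(1+R|z|)^N|K(x,z)|\,dz=F_a(N,R;x)
\end{equation*}
in the form of \eqref{Fa-id} with the choice $\chi(\eta)=(1+|\eta|^2)^{-N'}$. Summing over $|\alpha|\le 2N'$ yields \eqref{fact''-ineq}. Boundedness and continuity of $F_a$ carry over verbatim from the last paragraph of the proof of Theorem~\ref{Fau*-thm}: choosing $N''>(N+n)/2$ in the integration-by-parts provides an integrable majorant for $(1+R|y|)^N|K(x,y)|$, uniformly in $x$.

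The main obstacle is the middle step, namely the pointwise identification of $\OP(\tilde a)(D^\alpha u)(x)$ with its kernel integral when $u\in{\cal O}_M$ rather than Schwartz class; the rest is routine bookkeeping parallel to Theorem~\ref{Fau*-thm}.
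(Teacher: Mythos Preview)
Your proof is correct and follows essentially the same route as the paper. The only cosmetic difference is the order of operations: the paper keeps $(1-\Delta)^{N'}u$ together, derives the single estimate $|a(x,D)u(x)|\le F_a(x)\,((1-\Delta)^{N'}u)^*(x)$, and only then expands via subadditivity of $(\cdot)^*$; you expand $(1+|\eta|^2)^{N'}=\sum c_\alpha\eta^\alpha$ at the outset and treat each $\OP(\tilde a)(D^\alpha u)$ separately. Both reach the same conclusion by the same mechanism, and your justification of the kernel representation on ${\cal O}_M$ via the $S^{-\infty}$-regularisation~\eqref{aPsi-eq} matches the paper's density argument.
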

\begin{proof}
That $F_a$ is in $C(\Rn)\cap L_\infty (\Rn)$ can be seen as
above, for $a(x,\eta)\chi(\eta)\in S^{d-2N'}_{\rho,\delta}$ is
integrable with respect to $\eta$.
When $a\in S^{-\infty }$ and $u\in {\cal S}$,
\begin{equation}
  a(x,D)u(x)= 
  \int (1\mlap)^{N'} u(y){\cal F}_{\eta\to y}(\tfrac{e^{\im x\cdot
\eta}}{(2\pi)^n} a(x,\eta)\chi(\eta))\,dy.
  \label{aOM-eq}
\end{equation}
By continuity this extends to all $u\in {\cal S}'$, in particular to $u\in
{\cal O}_M$;
and since $S^{-\infty}$ is dense in $S^{d'}_{\rho,\delta}$ for $d'>d$, it extends 
then to all $a\in S^d_{\rho,\delta}$ since
$(1+|y|)^{-N''}(1\mlap)^{N'} u(y)$ is in $L_1$ for a large $N''$.
In the same way as in \eqref{auFa-ineq} this yields
\begin{equation}
  |a(x,D)u(x)|\le F_a(N,R;x) ((1\mlap)^{N'}u)^*(N,R;x).
\end{equation}
Since $(1\mlap)^{N'}u=\sum_{|\alpha|\le 2N'} c_{\alpha,N'}D^\alpha u$,
subadditivity of the maximal operator gives the rest.
\end{proof}

As a first consequence of the factorisation inequalities,
when $u\in {\cal O}_M$ then $a(x,D)u$ is of polynomial growth by 
\eqref{fact''-ineq}, and continuous by \eqref{aOM-eq}; indeed,
$F_a(x)$ is bounded and $D^\alpha u\in {\cal O}_M$
so $(D^\alpha u)^*(x)$ has such growth for
$N$ sufficiently large; cf.\ \eqref{u*xy-ineq}. 
Moreover, this applies to the commutator $[D^\beta,a(x,D)]$, say in
the class $\OP(S^{d+|\beta|}_{\rho,\delta})$, so also
$D^\beta a(x,D)u$ has polynomial growth. Which altogether proves

\begin{corollary}   \label{aOO-cor}
$a(x,D)$ is a map ${\cal O}_M(\Rn)\to {\cal O}_M(\Rn)$ when 
$a\in S^d_{\rho,\delta}$, $0\le \delta<\rho\le 1$.
\end{corollary}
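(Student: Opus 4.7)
The plan is to verify the two defining properties of $\cal O_M$ for $a(x,D)u$: that it is $C^\infty$ and that it together with all its derivatives is of polynomial growth. Both pieces follow by combining Theorem~\ref{FaOM-thm} with the slow-variation estimate \eqref{u*xy-ineq} and the standard symbol calculus.

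First I would apply Theorem~\ref{FaOM-thm} to $u\in\cal O_M(\Rn)$, fixing an integer $N'>(d+n)/2$ and picking $N$ so large that $(D^\alpha u)^*(N,R;0)<\infty$ for every $|\alpha|\le 2N'$; this is possible since each $D^\alpha u$ is in $\cal O_M$ and therefore bounded by $C_\alpha(1+|\cdot|)^{N_\alpha}$ for some $N_\alpha$, and the remark following \eqref{OM-eq} gives the finiteness at one point. The slow-variation bound \eqref{u*xy-ineq} then upgrades this to the polynomial estimate $(D^\alpha u)^*(x)\le (D^\alpha u)^*(0)(1+R|x|)^N$. Combined with the boundedness of $F_a$ from Theorem~\ref{FaOM-thm}, the factorisation inequality \eqref{fact''-ineq} yields that $a(x,D)u$ itself is of polynomial growth.

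Next I would pass to derivatives. Smoothness of $a(x,D)u$ is read off directly from the integral representation \eqref{aOM-eq}: the function $y\mapsto {\cal F}_{\eta\to y}(e^{\im x\cdot\eta}a(x,\eta)\chi(\eta))$ is jointly smooth and rapidly decaying in $y$ (here $\chi(\eta)=(1+|\eta|^2)^{-N'}$), while $(1\mlap)^{N'}u$ grows only polynomially, so dominated convergence allows arbitrary differentiation under the integral sign and shows $a(x,D)u\in C^\infty(\Rn)$. To obtain the polynomial bounds on its derivatives, I decompose
\begin{equation}
  D^\beta a(x,D)u=a(x,D)D^\beta u+[D^\beta,a(x,D)]u.
\end{equation}
The first term is the previous analysis applied to $D^\beta u\in\cal O_M$. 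For the commutator, the standard composition calculus (valid because $\delta<\rho$) places $[D^\beta,a(x,D)]$ in $\OP(S^{d+|\beta|}_{\rho,\delta})$, so the same argument applies.

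The only non-bookkeeping step is the claim that the commutator sits in $\OP(S^{d+|\beta|}_{\rho,\delta})$; this is where $\delta<\rho$ is genuinely used in the corollary (beyond its role in defining $a(x,D)$ on $\cal S'$), and if this were not available one would have to argue more carefully by differentiating \eqref{aOM-eq} directly and repeating the factorisation estimate on the resulting oscillatory integrals. Everything else is routine: bounded times polynomially growing factors give polynomially growing output.
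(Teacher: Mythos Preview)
Your proposal is correct and matches the paper's own argument essentially line for line: polynomial growth from Theorem~\ref{FaOM-thm} combined with \eqref{u*xy-ineq}, smoothness read off from \eqref{aOM-eq}, and the commutator $[D^\beta,a(x,D)]\in\OP(S^{d+|\beta|}_{\rho,\delta})$ to handle derivatives. One minor remark on your closing commentary: the commutator with a constant-coefficient differential operator $D^\beta$ has the exact symbol $\sum_{\gamma<\beta}\binom{\beta}{\gamma}\eta^\gamma D^{\beta-\gamma}_x a(x,\eta)$ by Leibniz, so placing it in $S^{d+|\beta|}_{\rho,\delta}$ does not actually require $\delta<\rho$; that hypothesis enters only through the definition of $a(x,D)$ on $\cal S'$.
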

While this is known for $\rho=1$, $\delta=0$ from e.g.\
\cite[Cor.~3.8]{SRay91}, the above version for 
the general case $0\le \delta<\rho\le 1$ is rather more direct.

Secondly, one may now obtain the
$L_p$-estimate mentioned in the introduction.

\begin{corollary}
  \label{aFE-cor}
For each $a\in S^d_{\rho,\delta}(\Rn\times \Rn)$, $0\le \delta<\rho\le 1$, 
and $p\in \,]0,\infty ]$ there is
to every $R\ge 1$, $N>n/p$ a constant $C(N,R)$ such that 
\begin{equation}
  \nrm{a(x,D)u}{p}\le C(N,R) \nrm{u}{p}
  \label{iLp-eq'}
\end{equation}
whenever $u\in L_p(\Rn)\bigcap{\cal S}'(\Rn)$,
fulfils
$\supp\hat u\subset \overline{B}(0,R)$.
\end{corollary}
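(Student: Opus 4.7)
The plan is to derive the corollary directly from the two main tools already established: the factorisation inequality of Theorem~\ref{Fau*-thm} and the maximal inequality of Theorem~\ref{max-thm}. The whole point of the factorisation set-up in Section~\ref{pest-sect} is to make such an $L_p$-estimate follow as an essentially trivial consequence, as already flagged after \eqref{iLp-eq} in the introduction.

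First I would invoke Theorem~\ref{Fau*-thm} with an auxiliary function $\chi\in C^\infty_0(\Rn)$ chosen so that $\chi\equiv 1$ on a neighbourhood of $\Bbar(0,R)$, which covers $\supp\hat u$. This yields the pointwise factorisation
\begin{equation*}
  |a(x,D)u(x)|\le F_a(N,R;x)\cdot u^*(N,R;x),\qquad x\in\Rn.
\end{equation*}
Next I would take $L_p$-norms on both sides and absorb the first factor into a supremum, so that
\begin{equation*}
  \nrm{a(x,D)u}{p}\le \nrm{F_a(N,R;\cdot )}{\infty }\cdot\nrm{u^*(N,R;\cdot )}{p}.
\end{equation*}
The first factor is finite by the continuity-and-boundedness part of Theorem~\ref{Fau*-thm}, and only depends on $N$, $R$ and the chosen $\chi$ (hence on the symbol seminorms of $a$). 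Since the assumption $N>n/p$ is exactly what Theorem~\ref{max-thm} requires, the second factor is controlled by $C'_{n,N,p}\nrm{u}{p}$. Setting $C(N,R):=\nrm{F_a(N,R;\cdot )}{\infty }\cdot C'_{n,N,p}$ then gives \eqref{iLp-eq'}.

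The one point that needs a word is the case $0<p<1$, where one must know that $u\in L_p\cap\mathcal{S}'$ with compact spectrum already satisfies $u\in L_\infty$ so that $u^*$ makes sense and is measurable; this was supplied by Lemma~\ref{uu*-lem} via the Nikolski\u\i--Plancherel--Polya inequality \eqref{urp-ineq}. The only mild obstacle I foresee is justifying finiteness of $\nrm{F_a(N,R;\cdot )}{\infty }$ in a way that is uniform in $x$: the inner argument in Theorem~\ref{Fau*-thm} does exactly this by inserting $(1+|y|^2)^{N'}(1+|y|^2)^{-N'}$ and using the compact support of $\chi$ together with the symbol estimates \eqref{Srd-eq}, so nothing beyond the previously established material is required.
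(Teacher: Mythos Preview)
Your proposal is correct and follows exactly the same route as the paper's own proof: take $L_p$-norms in the factorisation inequality of Theorem~\ref{Fau*-thm}, bound $F_a$ by its supremum (finite by that theorem), and apply the maximal inequality of Theorem~\ref{max-thm} using $N>n/p$, arriving at $C(N,R)=C'_{n,N,p}\sup_x|F_a(N,R;x)|$. Your additional remarks on the $0<p<1$ case and on why $\nrm{F_a}{\infty}$ is finite are accurate elaborations of points the paper leaves implicit.
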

\begin{proof} By taking $L_p$-norms on both sides of the factorisation
inequality, \eqref{iLp-eq'} results with $C(N,R)=C'_{n,N,p}\sup_x|F_a(N;R;x)|$,
cf. Theorem~\ref{max-thm}; 
this is finite according to Theorem~\ref{Fau*-thm}.
\end{proof}

Since the spectral condition on $u$ implies $u\in C^\infty $, it is
hardly surprising that the above $L_p$-result is valid for arbitrary orders
$d\in \R$. In fact it may, say for $1<p<\infty $, $(\rho,\delta)=(1,0)$, 
be proved simply by observing that $a(x,D)$ has the same
action on $u$ as some $b(x,D)\in \OP(S^{-\infty })$
so that boundedness of $b(x,D)$ on $L_p$ gives the rest.

It is noteworthy, however, that the existing proofs of
$L_p$-boundedness use fundamental parts of real analysis,
e.g.\ Marcinkiewicz interpolation and the Calderon--Zygmund lemma.
In contrast to this,  pointwise estimates lead straightforwardly to 
Corollary~\ref{aFE-cor}. This evident efficacy is also clear from 
the easy extension to the full range $0<p\le \infty $ and to type
$1,1$-operators in Section~\ref{t11-sect}.

\subsection{Estimates of the symbol factor}   \label{Faest-ssect}

To utilise Theorem~\ref{Fau*-thm} it is of course vital to control 
$F_a$. This
leads directly to integral conditions on $a$, similarly 
to the Mihlin--H{\"o}rmander theorem. 

\begin{theorem}
  \label{Fa-thm}
Assume $a(x,\eta)$ is in $S^d_{\rho,\delta}(\Rn\times\Rn)$, 
$0\le \delta<\rho\le 1$, and let $F_a(N,R;x)$ be given by   
\eqref{Fa-id} for parameters $R,N>0$, whereby the auxiliary function is
taken as $\chi=\psi(R^{-1}\cdot)$ for $\psi\in C^\infty_0(\Rn)$ equalling
$1$ in (the closure of) an open set. Then 
\begin{equation}
 0\le F_a(x) \le c_{n,k} \sum_{|\alpha|\le k} 
  (\int_{R\supp \psi} |R^{|\alpha|}D^{\alpha}_\eta a(x,\eta)|^2
    \,\frac{d\eta}{R^n})^{1/2}
  \label{FaMH-eq}
\end{equation}
for all $x\in \Rn$,
when $k$ is the least integer satisfying $k>N+n/2$.
\end{theorem}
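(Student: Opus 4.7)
The plan is to reduce the $L^1$-norm in the definition of $F_a$ to a weighted $L^2$-norm, then to Plancherel and Leibniz. So the guiding formula is to write, for $k$ the least integer with $k>N+n/2$,
\begin{equation*}
 \int(1+R|y|)^N|{\cal F}^{-1}(a(x,\cdot)\chi)|\,dy
 = \int(1+R|y|)^{N-k}\cdot (1+R|y|)^k|{\cal F}^{-1}(a(x,\cdot)\chi)(y)|\,dy,
\end{equation*}
and apply the Cauchy--Schwarz inequality. The first factor becomes $(\int(1+R|y|)^{2(N-k)}dy)^{1/2}=c_{n,k,N}R^{-n/2}$ after the substitution $z=Ry$, and this is precisely the step where the assumption $2(k-N)>n$ is used to secure convergence.

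Next I would expand the remaining weight polynomially: using $(1+R|y|)^2\le 2(1+R^2|y|^2)$ iterated $k$ times, plus the multinomial identity $|y|^{2j}=\sum_{|\alpha|=j}c_\alpha y^{2\alpha}$, one gets
\begin{equation*}
 (1+R|y|)^{2k}\le c_k \sum_{|\alpha|\le k} R^{2|\alpha|}\,y^{2\alpha}.
\end{equation*}
Since $|y^\alpha|^2=y^{2\alpha}$ and $y^\alpha{\cal F}^{-1}\phi=\pm \im^{|\alpha|}{\cal F}^{-1}(D_\eta^\alpha\phi)$, Plancherel's theorem converts each term to an $L^2$-integral of $R^{|\alpha|}D_\eta^\alpha(a(x,\cdot)\chi)$, producing an overall factor $R^{n/2}$ that cancels the $R^{-n/2}$ from the first Cauchy--Schwarz factor. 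Concretely,
\begin{equation*}
 \Bigl(\int(1+R|y|)^{2k}|{\cal F}^{-1}(a(x,\cdot)\chi)|^2\,dy\Bigr)^{1/2}
 \le c'_k R^{n/2}\sum_{|\alpha|\le k}\Bigl(\int|R^{|\alpha|}D_\eta^\alpha(a(x,\cdot)\chi)|^2\,\tfrac{d\eta}{R^n}\Bigr)^{1/2}.
\end{equation*}

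Finally I would remove the cutoff by the Leibniz rule. Since $\chi(\eta)=\psi(\eta/R)$ one has $D_\eta^\beta\chi = R^{-|\beta|}(D^\beta\psi)(\eta/R)$, bounded uniformly and supported in $R\supp\psi$; expanding $D_\eta^\alpha(a\chi)=\sum_{\beta\le\alpha}\binom{\alpha}{\beta}D_\eta^{\alpha-\beta}a\cdot D_\eta^\beta\chi$, the factors $R^{|\alpha|}$ and $R^{-|\beta|}$ combine to $R^{|\alpha-\beta|}$ on $D_\eta^{\alpha-\beta}a$, all indices still lying in $\{|\gamma|\le k\}$ and all integrals restricted to $R\supp\psi$. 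This exactly yields the sum on the right of \eqref{FaMH-eq}, so collecting constants finishes the proof.

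The only real obstacle is bookkeeping of the $R$-powers: making sure the $R^{-n/2}$ from the first Cauchy--Schwarz factor, the $R^{n/2}$ from Plancherel (because $d\eta=R^n\,d\eta/R^n$), and the $R^{|\alpha|}$/$R^{-|\beta|}$ from the Leibniz expansion assemble into the scale-invariant Mihlin--Hörmander form with $d\eta/R^n$ on the annular set $R\supp\psi$. Everything else is routine.
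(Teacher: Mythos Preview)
Your proof is correct and follows essentially the same route as the paper's: Cauchy--Schwarz to pass from $L^1$ to a weighted $L^2$-norm (with the factor $R^{-n/2}$), then commuting the polynomial weight $(Ry)^\alpha$ through ${\cal F}^{-1}$, Plancherel, and finally the Leibniz rule together with $D^\beta_\eta\psi(R^{-1}\cdot)=R^{-|\beta|}(D^\beta\psi)(R^{-1}\cdot)$ to restrict the integral to $R\supp\psi$ and recover the correct $R$-powers. The only cosmetic difference is that the paper expands $(1+|Ry|)^k$ via the multinomial-type inequality \eqref{zN-ineq} and uses the triangle inequality in $L^2$, whereas you expand $(1+R|y|)^{2k}\le c_k(1+R^2|y|^2)^k$ directly; both lead to the same estimate.
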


First it is convenient to recall that, 
for $z\in \Rn$ and $k\in \N$, an expansion yields 
\begin{equation}
  (1+|z|)^k\le \sum_{j=0}^k \binom kj (|z_1|+\dots  +|z_n|)^j
  =\sum_{|\alpha|\le k}C_{k,\alpha} |z^\alpha|.
  \label{zN-ineq}
\end{equation}

\begin{proof} 
The idea is to pass to the $L_2$-norm in \eqref{Fa-id}
using Cauchy--Schwarz' inequality and that 
$(\int R^n(1+|Ry|)^{-n-\varepsilon}\,dy)^{1/2}<\infty$
for $\varepsilon>0$.
Thus, if $\varepsilon$ is so small that $k\ge  N+(n+\varepsilon)/2$,
\begin{equation}
  F_a(N,R;x)\le c_nR^{-n/2}(\int (1+|Ry|)^{2k}
        |{\cal F}^{-1}_{\eta\to y}[a(x,\cdot )\psi(R^{-1}\cdot )]|^2\,dy)^{1/2}.
\end{equation}
Applying \eqref{zN-ineq} to $z=Ry$ 
and `commuting' the resulting polynomials $(Ry)^\alpha$ with 
the inverse Fourier transformation, it is seen that for fixed $x\in \Rn$,
\begin{equation}
  \begin{split}
    F_a(x)&\le c_n R^{-n/2} \sum_{|\alpha|\le k} C_{k,\alpha}
   (\int|{\cal F}^{-1}_{\eta\to y}
   [(\im R\partial_\eta)^{\alpha}
    a(1\otimes \psi(R^{-1}\cdot ))](x,y)|^2\,dy)^{1/2}
\\
  &\le c\sum_{|\alpha+\beta|\le k}
   (\int_{R\supp \psi} (R^{|\alpha+\beta|}|D^{\alpha}_\eta a(x,\eta)|
   |D^{\beta}(\psi(R^{-1}\eta))|)^2\,\frac{d\eta}{R^n})^{1/2}.
  \end{split}
  \label{Fapsi-ineq}
\end{equation}
Since $D^{\beta}(\psi(R^{-1}\cdot ))=R^{-|\beta|}(D^\beta\psi)(R^{-1}\cdot
)$ is bounded, the result follows.
\end{proof}

\begin{remark}
As an alternative to the estimate $|a(x,D)u(x)|\le  F_a(x)u^*(x)$, it
deserves to be mentioned that other useful properties can be obtained in a
similar fashion:
by defining an $a$-factor in terms of an $L_2$-norm, i.e.\ 
\begin{equation}
  \tilde F_a(N,R;x)^2=\int_{\Rn}(1+|Ry|)^{2N}|{\cal F}^{-1}_{\eta\to y}
  (a(x,\cdot)\chi(\cdot))|^2\,dy,
  \label{tFa-id}
\end{equation}
the Cauchy--Schwarz inequality gives
\begin{equation}
  |a(x,D)u(x)|\le \tilde F_a(N,R;x)
    (\int_{\Rn}\frac{|u(x-y)|^2}{(1+|Ry|)^{2N}}\,dy)^{1/2}
  \le c\tilde F_a(N,R;x)u^*(\varepsilon,R;x)
\end{equation}
where $c=(\int (1+|Ry|)^{-2(N-\varepsilon)}\,dy)^{1/2}<\infty $
whenever $N>n/2+\varepsilon$ for some $\varepsilon>0$.

For one thing $\tilde F_a^2\in C^\infty (\Rn)$, with bounded
derivatives of any order.
Secondly, this gives a version of Theorem~\ref{Fa-thm}
where only estimates with $|\alpha|\le[n/2]+1$ 
is required, as in the Mihlin--H{\"o}rmander theorem.
But it would not be feasible in general to replace $u^*(N,R;x)$ by
$u^*(\varepsilon,R;x)$ for small $\varepsilon$ as above, so 
$\tilde F_a(x)$ is only mentioned in this remark.
\end{remark}

Although it is a well-known exercise to control
\eqref{FaMH-eq} in terms of symbol seminorms, it is
important to control the behaviour with respect to $R$ and to verify that it
improves when $a(x,\cdot )\hat u(\cdot )$ is supported in a corona. 
Therefore the special case in
\eqref{cpsi-eq} below is included: 

\begin{corollary}
  \label{Fa-cor} 
Assume $a\in S^{d}_{1,\delta}(\Rn\times\Rn)$, $0\le \delta<1$, 
and let $N$, $R$ and $\psi$ have the same meaning as in Theorem~\ref{Fa-thm}.
When $R\ge 1$ and $k>N+n/2$, $k\in \N$,
then there is a seminorm $p$ on $S^d_{1,\delta}$ and some 
$c_k>0$ independent of $R$ such that 
\begin{equation}
 0\le F_a(x) \le c_k p(a) R^{\max(d,k)} \quad\text{for all}\quad x\in
\Rn. 
  \label{Rmax-eq}
\end{equation}
Moreover, if $\supp\psi$ is contained in a corona
\begin{equation}
  \{\,\eta\mid \theta_0 \le|\eta|\le \Theta_0 \,\},
  \label{cpsi-eq}
\end{equation}
and 
$\psi(\eta)=1$ holds for $\theta_1\le |\eta|\le \Theta_1$, whereby
$0\ne\theta_0<\theta_1<\Theta_1<\Theta_0$,
then
\begin{equation}
  0\le F_a(x)\le c'_kR^{d}p(a) \quad\text{for all}\quad x\in \Rn,
  \label{Rd-eq}
\end{equation}
with $c'_k=c_k\max(1,\theta_0^{d-k},\theta_0^d)$.
\end{corollary}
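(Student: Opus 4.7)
The plan is to substitute the H\"ormander-type estimates $|D^\alpha_\eta a(x,\eta)| \le C_\alpha (1+|\eta|)^{d-|\alpha|}$ (valid with $\rho=1$) directly into the Mihlin-style bound \eqref{FaMH-eq} of Theorem~\ref{Fa-thm}, and then control the $R$-dependence coming from the weight $R^{|\alpha|}$, the symbol weight $(1+|\eta|)^{d-|\alpha|}$, and the integration domain $R\supp\psi$. Since the resulting integrand depends on $x$ only through an absolute constant, the whole matter reduces to elementary geometric estimates of $1+|\eta|$ on $R\supp\psi$. The seminorm may be taken as $p(a)=\sum_{|\alpha|\le k}C_\alpha$, where the $C_\alpha$ come from \eqref{Srd-eq}. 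Non-negativity of $F_a$ is already immediate from its definition \eqref{Fa-id}.

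For the general bound \eqref{Rmax-eq}, I would note that on $R\supp\psi$ one has $|\eta|\le C_\psi R$ with $C_\psi=\sup_{\xi\in\supp\psi}|\xi|$, so $1+|\eta|\le(1+C_\psi)R$ for $R\ge 1$. Splitting the summands of \eqref{FaMH-eq} by the sign of $d-|\alpha|$: when $d\ge|\alpha|$ this upper bound gives $R^{|\alpha|}(1+|\eta|)^{d-|\alpha|}\le C\,R^d$; when $d<|\alpha|$ the trivial estimate $(1+|\eta|)^{d-|\alpha|}\le 1$ gives $R^{|\alpha|}\le R^k$. Since $\int_{R\supp\psi}R^{-n}\,d\eta=|\supp\psi|$ is a bounded constant, taking the square root of each summand and summing over $|\alpha|\le k$ yields \eqref{Rmax-eq} with the uniform exponent $\max(d,k)$.

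For the corona case \eqref{Rd-eq}, the new input is the lower bound $|\eta|\ge\theta_0 R$ on $R\supp\psi$, which combined with the upper bound makes $1+|\eta|\asymp R$ uniformly on $R\supp\psi$. This upgrades the splitting above to a single uniform estimate $(1+|\eta|)^{d-|\alpha|}\le c_{d,\alpha,\theta_0,\Theta_0}\,R^{d-|\alpha|}$ valid for every $|\alpha|\le k$, irrespective of the sign of $d-|\alpha|$. Multiplying by $R^{|\alpha|}$ extracts a uniform factor $R^d$ from every summand, and \eqref{Rd-eq} follows exactly as before. The only bookkeeping step to watch is the explicit constant: when $d-|\alpha|<0$ the factor $(1+|\eta|)^{d-|\alpha|}$ is bounded via the lower bound $\theta_0 R$ and so picks up $\theta_0^{d-|\alpha|}$; the worst such contribution is $\theta_0^{d-k}$ (from $|\alpha|=k$, $d<k$) and $\theta_0^d$ (from $|\alpha|=0$, $d<0$), while in the remaining combinations a factor $1$ suffices. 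Packaging these as $\max(1,\theta_0^{d-k},\theta_0^d)$ reproduces the stated $c'_k$; this is the main (mild) obstacle, and nothing harder is required.
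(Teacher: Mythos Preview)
Your proposal is correct and follows essentially the same route as the paper: insert the symbol estimates $|D^\alpha_\eta a|\le C_\alpha(1+|\eta|)^{d-|\alpha|}$ into \eqref{FaMH-eq}, then split according to the sign of $d-|\alpha|$ (the paper phrases this via $d\ge k$ versus $d<k$ after the substitution $\eta=R\zeta$, but the content is identical), and in the corona case use the lower bound $|\eta|\ge\theta_0 R$ to upgrade the negative-exponent terms to $R^d$. Your tracking of the constant $\max(1,\theta_0^{d-k},\theta_0^d)$ matches the paper's.
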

\begin{remark} \label{asymp1-rem}
For general $\rho\in \,]0,1]$, the asymptotics of $F_a(x)$ for $R\to\infty $ 
corresponding to \eqref{Rmax-eq}, \eqref{Rd-eq} will be 
${\cal O}(R^{\max(d+(1-\rho)k,k)})$
and ${\cal O}(R^{d+(1-\rho)k})$, respectively.
Details are left out for simplicity's sake.
\end{remark}

\begin{proof} 
Setting
$p_{\alpha,\beta}(a)=\sup
(1+|\eta|)^{-d+|\alpha|-\delta|\beta|}|D^\beta_xD^\alpha_\eta a(x,\eta)|$
and continuing from the proof of Theorem~\ref{Fa-thm}, 
the change of variables $\eta=R\zeta$ gives
\begin{equation}
  \begin{split}
  F_a(x) & \le c\sum_{|\alpha|\le k}     p_{\alpha,0}(a)
   (\int_{\supp\psi} |(1+|R\zeta|)^{d-|\alpha|} R^{|\alpha|}|^2
   \,d\zeta)^{\tfrac{1}{2}} 
\\
  &\le C'_{n,k} R^{\max(d,k)}\sum_{|\alpha|\le k}p_{\alpha,0}(a).
   \end{split}
  \label{Fap-ineq}
\end{equation}
In fact $d\ge k\ge |\alpha|$ gives
$R^{|\alpha|}(1+R|\zeta|)^{d-|\alpha|}\le R^d(1+|\zeta|)^{d-|\alpha|}$ 
since $R\ge1$; 
for $d<k$ the crude estimate 
$R^{|\alpha|}(1+R|\zeta|)^{d-|\alpha|}\le R^{k}$ applies e.g.\ for $|\alpha|=k$.
This shows \eqref{Rmax-eq}.

In case $\psi$ is supported in a corona as described,
$d-|\alpha|<0$ and $\zeta\in \supp\psi$ entail
\begin{equation}
  (1+|R\zeta|)^{d-|\alpha|}R^{|\alpha|}
  \le (R\theta_0)^{d-|\alpha|} R^{|\alpha|}
  \le \max(\theta_0^{d-k},\theta_0^d)R^d.
\end{equation}
This yields an improvement of \eqref{Fap-ineq} for terms with
$|\alpha|>d$; thence \eqref{Rd-eq}. 
\end{proof}

As desired Corollary~\ref{Fa-cor} shows that the $a$-factor $F_a(x)$ 
has its sup-norm bounded by a symbol seminorm. 
This applies of course in $|a(x,D)u(x)|\le F_a(x)u^*(x)$.

In this connection, one could simply take $R$ equal to the spectral
radius of $u$, or if possible $R$ so large that the corona 
$\{\,\eta\mid \theta_1R\le |\eta|\le \Theta_1R\,\}$
is a neighbourhood of $\supp a(x,\cdot )\hat u(\cdot )$ for all $x$;
cf~\eqref{Rmax-eq} and \eqref{Rd-eq}.

However, a good choice of $N$ is a more delicate question, which in general involves the
order of ${\cal F}u$ as a distribution.
E.g.\ $N\ge\order({\cal F} u)$ was seen in Section~\ref{PFS-sect} to imply that
$u^*(N,R;x)$ is finite everywhere. This was relaxed completely to $N>0$
for $u\in L_p\cap {\cal F}^{-1}{\cal E}'$ in Lemma~\ref{uu*-lem}; moreover,
for arbitrary $u\in L_p$ with $1\le p\le 2$, the order of ${\cal F}u$ is $0$,
so $u^*$ is finite regardless of $N>0$.

Especially for functions $u$ in Sobolev spaces $H^s$ the function $u^*$ is
always finite for $N>0$. Therefore it is harmless that the estimates 
in Corollary~\ref{Fa-cor} depend on $N$, for only
seminorms $p_{\alpha,0}(a)$ with $|\alpha|\le 1+[n/2+N]$ enters
there, and by taking $0<N< 1/2$ in both odd and even dimensions
estimates of $a(x,D)u(x)$ with $u\in \bigcup H^s$ only requires
the well-known estimates of $D^\alpha_{\eta}a(x,\eta)$ for 
$|\alpha|\le [n/2]+1$. 

However, in connection with $L_p$-bounds of $u^*(x)$, one is often forced to take
$N>n/p$ in the $L_p$-estimates of $a(x,D)u$; cf.\ Theorem~\ref{max-thm}.

\bigskip

In addition to high frequencies removed by the spectral cut-off function $\chi$ in
Theorem~\ref{Fau*-thm}, the symbols dependence on $x$
may be frequency modulated by means of a Fourier multiplier
$\varphi(Q^{-1}D_x)$, which depends on a second spectral quantity $Q>0$.
For the modified symbol
\begin{equation}
  a_{Q}(x,\eta)=\varphi(Q^{-1}D_x)a(x,\eta)
\end{equation}
and the corresponding symbol factor one can as shown below find its
asymptotics for $Q\to\infty $. In Littlewood--Paley theory, this is a
frequently asked question for $F_{a_Q}$:

\begin{corollary}
  \label{Fa0-cor}
When $a\in S^d_{1,\delta}(\Rn\times \Rn)$, $0\le\delta<1$ and $\varphi\in C^\infty_0(\Rn)$ with $\varphi=0$
in a neighbourhood of $\xi=0$, then there is a seminorm $p$ on $S^d_{1,\delta}$
and constants $c_M$, depending only on $M$, $n$, $N$, $\psi$ and $\varphi$,
such that for $R\ge 1$, $M>0$, $Q>0$,
\begin{equation}
  0\le F_{a_{Q}}(N,R;x)\le c_M p(a)Q^{-M} R^{\max(d+\delta M,[N+n/2]+1)}.
\label{Fa0-ineq}
\end{equation}
Here $d+\delta M$ can replace the maximum when the auxiliary function $\psi$ in
$F_{a_{Q}}$ fulfils the corona condition in Corollary~\ref{Fa-cor}.
\end{corollary}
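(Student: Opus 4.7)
\noindent The plan is to reduce matters to Corollary~\ref{Fa-cor} by establishing that $a_Q$ lies in $S^{d+\delta M}_{1,\delta}$ with seminorms controlled by $Q^{-M}$ times seminorms of $a$; inserting this into \eqref{Rmax-eq} will then produce \eqref{Fa0-ineq}, and the corona variant \eqref{Rd-eq} will give the improved exponent $R^{d+\delta M}$ when $\psi$ satisfies the corona condition. Integer $M\ge 1$ is treated first; the extension to real $M>0$ follows by working with $\lceil M\rceil$ and adjusting constants.

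With $\Phi=\cal F^{-1}\varphi\in\cal S(\Rn)$ the multiplier $\varphi(Q^{-1}D_x)$ acts as convolution in $x$, so
\[
a_Q(x,\eta)=\int_{\Rn}Q^n\Phi(Qy)\,a(x-y,\eta)\,dy.
\]
The decisive feature is that $\varphi\equiv 0$ near $\xi=0$, whence every moment $\int y^\alpha\Phi(y)\,dy=(-\im)^{|\alpha|}D^\alpha\varphi(0)$ vanishes. Hence a Taylor expansion of $a(x-y,\eta)$ about $x$ to order $M$ annihilates the polynomial part, so only the integral remainder of order $M$ remains.

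Rescaling $y\mapsto y/Q$ in this remainder extracts the factor $Q^{-M}$; combined with the symbol estimate \eqref{Srd-eq} (with $\rho=1$) for $D^{\alpha+\beta}_xD^\gamma_\eta a$ and the integrability $|y|^M\Phi(y)\in L_1$, differentiation under the integral yields
\[
|D^\beta_x D^\gamma_\eta a_Q(x,\eta)|\le c\,Q^{-M}p(a)\,(1+|\eta|)^{d+\delta M-|\gamma|+\delta|\beta|}
\]
for a suitable seminorm $p$ on $S^d_{1,\delta}$. In other words $a_Q\in S^{d+\delta M}_{1,\delta}$ with seminorms $\lesssim Q^{-M}p(a)$.

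Applying Corollary~\ref{Fa-cor} to $a_Q$ (of order $d+\delta M$) with $k=[N+n/2]+1$ then gives
\[
F_{a_{Q}}(N,R;x)\le c_k p(a_Q)R^{\max(d+\delta M,k)}\le c_M p(a)Q^{-M}R^{\max(d+\delta M,[N+n/2]+1)},
\]
which is \eqref{Fa0-ineq}; under the corona assumption on $\psi$, the sharper estimate \eqref{Rd-eq} of Corollary~\ref{Fa-cor} replaces the max-exponent by $d+\delta M$. The main obstacle is the moment-cancellation of $\Phi$ itself: that is what converts the $x$-regularity of $a$ (encoded by $\delta$) into the asymptotic gain $Q^{-M}$. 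Everything else amounts to standard bookkeeping with symbol seminorms under convolution.
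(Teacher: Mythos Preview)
Your argument is correct and follows essentially the same route as the paper's proof: both exploit the vanishing moments of $\cal F^{-1}\varphi$ to replace $a_Q$ by the Taylor remainder of order $M$, rescale to extract $Q^{-M}$, and then invoke Corollary~\ref{Fa-cor} at order $d+\delta M$. The only organisational difference is that you first package the outcome as the symbol estimate $a_Q\in S^{d+\delta M}_{1,\delta}$ with seminorms $\lesssim Q^{-M}p(a)$ and then apply Corollary~\ref{Fa-cor} once, whereas the paper keeps the $\tau,z$-integrals explicit and applies Corollary~\ref{Fa-cor} to $\partial_x^\beta a$ at the shifted points $x-\tau z/Q$ inside the integral; the seminorms that appear are identical in the end.

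One small caveat: your remark about extending from integer $M$ to real $M>0$ by passing to $\lceil M\rceil$ is not quite clean, since for $Q<1$ one has $Q^{-\lceil M\rceil}\ge Q^{-M}$ while the $R$-exponent also changes. The paper's own proof in fact works with $M\in\N$ (Taylor's formula), so the statement is most naturally read for integer $M$; this is all that is needed in the applications in Section~\ref{corona-sect}.
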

\begin{proof}
Because
$a_{Q}(x,\eta)=\int Q^n\check \varphi(Qz)a(x-z,\eta)\,dz$, where $\check
\varphi$ has vanishing moments of every order, Taylor's formula with
remainder gives for any $M\in \N$
\begin{equation}
  a_{Q}(x,\eta)=\sum_{|\beta|=M}\tfrac{M}{\beta!}\int(-z)^\beta
Q^n\check\varphi(Qz) \int_0^1
(1-\tau)^{M-1}\partial^\beta_x a(x-\tau z,\eta)\,d\tau\,dz.
\end{equation}
Letting $z^{\beta}$ absorb $Q^{M}$ before substitution of $z$ by
$z/Q$, one finds
\begin{equation}
  \begin{split}
  Q^MF_{a_{Q}}(N,R;x)
&\le \sum_{|\beta|=M} \tfrac{M}{\beta!}
 \iiint  (1-\tau)^{M-1}(1+|z|)^{M}|\check \varphi(z)|(1+|Ry|)^N
\\
  &\hphantom{\le \sum_{|\beta|=M} \iiint } 
  \times  |{\cal F}^{-1}_{\eta\to y}
  (\partial^\beta_x a(x-\tau z/Q,\eta)\psi(\eta/R))|
  \,d\tau\,dz\,dy.  
  \end{split}
\end{equation}
Integrating first with respect to $y$ it follows by applying
Corollary~\ref{Fa-cor} to $\partial^{\beta}_x a \in
S^{d+\delta M}_{1,\delta}$ that,
by setting $p(a)=\sum p_{\alpha,\beta}(a)$
where $|\alpha|\le [N+n/2]+1$ and $|\beta|= M$, 
\begin{equation}
  F_{a_{Q}}(N,R;x)\le c_M p(a)R^{d+\delta M} Q^{-M}.
  \label{Faj-ineq}
\end{equation}
This is in case $\psi$ satisfies the corona condition. Otherwise the stated inequality
\eqref{Fa0-ineq} results. 
\end{proof}

\begin{remark}
In comparison with Remark~\ref{asymp1-rem}, the asymptotics for $R\to\infty$ are here
${\cal O}(R^{\max(d+\delta M+(1-\rho)k,k)})$ and ${\cal O}(R^{d+\delta M+(1-\rho)k})$, respectively,
for $k=[N+n/2]+1$.
\end{remark}

\begin{remark}
  \label{Marschall-rem}
As an alternative to the techniques in this section, Marschall's inequality 
gives a pointwise estimate for symbols $b(x,\eta)$ in 
$L_{1,\loc}(\R^{2n})\cap {\cal S}'(\R^{2n})$ 
with support in $\Rn\times \Bbar(0,2^k)$ and 
$\supp{\cal F} u\subset \Bbar(0,2^k)$, $k\in\N$:
\begin{equation}
  |b(x,D)v(x)|\le c\Nrm{b(x,2^k\cdot )}{\dot B^{n/t}_{1,t}} M_t u(x),
  \qquad 0<t\le 1.
  \label{Marschall-ineq}
\end{equation}
This goes back to \cite[p.37]{Mar85} and 
was exploited in e.g.\ \cite{Mar91,Mar95,Mar96}. In the above form it was proved in
\cite{JJ05DTL} under the natural condition that the right-hand side is in
$L_{1,\loc}(\Rn)$.
While $M_tu$ is as in Remark~\ref{max-rem}, the norm of the
homogeneous Besov space $\dot B^{n/t}_{1,t}$ on the symbol
is defined analogously to that of $B^{s}_{p,q}$ in \eqref{Bspq-id} below 
in terms of a partition of unity, though here with $1=\sum_{j=-\infty }^\infty
(\varphi(2^{-j}\eta)-\varphi(2^{1-j}\eta))$, $\eta\ne0$ so that \eqref{Bspq-id}
should be read with $\ell_q$ over $\mathbb{Z}$. This yields the
well-known dyadic scaling property that
\begin{equation}
  \Nrm{b(x,2^k\cdot )}{\dot B^{n/t}_{1,t}}
  = 2^{k(\frac nt-n)}\Nrm{b(x,\cdot)}{\dot B^{n/t}_{1,t}}.  
\end{equation}
While this can be useful, and indeed fits well into the framework of the
next section, cf.\ \cite{Mar91,Mar95,Mar96}, 
it is often simpler to use the factorisation inequality with $F_b$ and $u^*$ etc.
\end{remark}

\section{Littlewood--Paley analysis}
  \label{corona-sect}

In order to obtain $L_p$-estimates, it is
convenient to depart from the limit in \eqref{aPsi-eq}.
As usual the test function  $\psi$ there gives rise to a
Littlewood--Paley decomposition $1=\psi(\eta)+\sum_{j=1}^\infty 
\varphi(2^{-j}\eta)$ by setting $\varphi=\psi-\psi(2\cdot )$.
Note here that if $\psi\equiv 1$ for $|\eta|\le r$ while $\psi\equiv 0$ for
$|\eta|\ge R$, one can fix an integer $h\ge 2$ so that $2R< r2^h$.

Inserting twice into \eqref{aPsi-eq} that
$\psi(2^{-m}\eta)=\psi(\eta)+\varphi(2^{-1}\eta)+
\dots +\varphi(2^{-m}\eta)$, the so-called paradifferential splitting from the 1980's is
recovered: 
whenever $a(x,\eta)$ is in $S^d_{\rho,\delta}$, $0\le \delta<\rho\le 1$, and
$u\in {\cal S}'(\Rn)$,
\begin{equation}
  a_{\psi}(x,D)u=
  a_{\psi}^{(1)}(x,D)u+a_{\psi}^{(2)}(x,D)u+a_{\psi}^{(3)}(x,D)u,
  \label{a123-eq}
\end{equation}
whereby the expressions are given by 
the three series below (they converge in ${\cal S}'$),
\begin{align}
    a_{\psi}^{(1)}(x,D)u&=\sum_{k=h}^\infty \sum_{j\le k-h} a_j(x,D)u_k
  =\sum_{k=h}^\infty a^{k-h}(x,D)u_k
  \label{a1-eq}\\
  a_{\psi}^{(2)}(x,D)u&= \sum_{k=0}^\infty
               \bigl(a_{k-h+1}(x,D)u_k+\dots+a_{k-1}(x,D)u_k+a_{k}(x,D)u_k
\notag\\[-2\jot]
   &\qquad\qquad
                +a_{k}(x,D)u_{k-1} +\dots+a_k(x,D)u_{k-h+1}\bigr) 
  \label{a2-eq}\\
   a_{\psi}^{(3)}(x,D)u&=\sum_{j=h}^\infty\sum_{k\le j-h}a_j(x,D)u_k
   =\sum_{j=h}^\infty a_j(x,D)u^{j-h}.
  \label{a3-eq}
\end{align}
Here $u_k=\varphi(2^{-k}D)u$ while
$a_k(x,\eta)=\varphi(2^{-k}D_x)a(x,\eta)$;
by convention $\varphi$ is replaced by $\psi$ for $k=0$ and 
$u_k\equiv 0\equiv a_k$ for $k<0$.
In addition superscripts are used for the convenient shorthands $u^{k-h}$
and $a^{k-h}(x,D)$; e.g.\ the latter is given by
$a^{k-h}(x,D)=\sum_{j\le k-h}a_j(x,D)=\op{OP}(\psi(2^{h-k}D_x)a(x,\eta))$. 
Using this, there is a
brief version of \eqref{a2-eq},
\begin{equation}
 a_{\psi}^{(2)}(x,D)u=\sum_{k=0}^\infty
((a^{k}-a^{k-h})(x,D)u_k+a_k(x,D)(u^{k-1}-u^{k-h})). 
  \label{a2'-eq}
\end{equation}

The main point here is that the series in \eqref{a1-eq}--\eqref{a2'-eq} are easily treated
with the tools of the present paper. First of all, one has
the following inclusions for the spectra of the summands in 
\eqref{a1-eq}, \eqref{a3-eq} and \eqref{a2'-eq},
with $R_h=\tfrac{r}{2}-R2^{-h}$:
\begin{gather}
  \supp{\cal F}(a^{k-h}(x,D)u_k)\subset
  \bigl\{\,\xi\bigm| 
  R_h2^k\le|\xi|\le \tfrac{5R}{4} 2^k\,\bigr\},
  \label{supp1-eq}  \\
  \supp{\cal F}(a_k(x,D)u^{k-h})\subset
  \bigl\{\,\xi \bigm| 
  R_h2^k\le|\xi|\le \tfrac{5R}{4} 2^k\,\bigr\},
  \label{supp3-eq}  \\
  \supp{\cal F}\big(a_k(x,D)(u^{k-1}-u^{k-h})+(a^k-a^{k-h})(x,D)u_k\big)\subset
  \bigl\{\,\xi\bigm| |\xi|\le 2R 2^k\,\bigr\}.
  \label{supp2-eq}
\end{gather}
Such spectral corona and ball properties
have been known since the 1980's (e.g.\ \cite[(5.3)]{Y1}) 
although they were verified then only for
elementary symbols $a(x,\eta)$, in the sense of Coifman and
Meyer~\cite{CoMe78}. However, this is now a redundant restriction because of 
the \emph{spectral support rule}, which for 
$u\in {\cal F}^{-1}{\cal E}'(\Rn)$ states that
\begin{equation}
     \supp{\cal F}(a(x,D)u)\subset
\bigl\{\,\xi+\eta \bigm| (\xi,\eta)\in \supp{\cal F}_{x\to\xi\,} a,\ 
     \eta\in \supp{\cal F} u \,\bigr\},
  \label{Sigma-eq}
\end{equation}
A short proof of this can  be found in \cite[App.~B]{JJ10tmp}
(cf.\ \cite{JJ05DTL,JJ08vfm,JJ10tmp} for the full version).
Since \eqref{supp1-eq}--\eqref{supp2-eq} follow easily from \eqref{Sigma-eq},
as shown in \cite{JJ05DTL,JJ10tmp}, details are omitted. 

The novelty in relation to pointwise estimates is that the summands in the
decomposition \eqref{a1-eq}--\eqref{a3-eq} can be controlled thus:
for $a^{(1)}_\psi(x,D)u$ the fact that $k\ge h\ge 2$ allows the corona
condition of Corollary~\ref{Fa-cor} to be fulfilled for $\Theta_0=r/2$ and
$\Theta_1=R$ (i.e.\ the auxiliary function there is $1$ on $\supp\hat u$),
so \eqref{a1-eq} and the factorisation inequality simply give the first estimate:
\begin{equation}
  |a^{k-h}(x,D)u_k(x)|\le F_{a^{k-h}}(N,R2^k;x)u_k^*(N,R2^k;x)
  \le cp(a)(R2^k)^d u_k^*(x).
  \label{akh-ineq}
\end{equation}
Hereby the convolution estimate $p(a^{k-h})\le \nrm{{\cal F}^{-1}\psi}{1} p(a)$ 
is utilised to get a constant independent of $k$.

In $a^{(2)}_\psi(x,D)u$ the terms may be treated similarly:
in \eqref{a2'-eq} it is for $k\ge 1$ clear that $(a^k-a^{k-h})(x,D)u_k$ 
only requires the constant to have $\nrm{{\cal F}^{-1}(\psi-\psi(2^h\cdot))}{1}$
as a factor instead of $\nrm{{\cal F}^{-1}\psi}{1}$, cf.\ the above; 
while for $k=0$ it may just be
increased by a fixed power of $R$ using the full generality of Corollary~\ref{Fa-cor}.
The remainders in \eqref{a2'-eq} have $k>0$ and can be written as in
\eqref{a2-eq}. Hence one obtains the second estimate:
\begin{multline}
 |(a^k-a^{k-h})(x,D)u_k(x)+a_k(x,D)[u^{k-1}-u^{k-h}](x)|
\\
 \le F_{a^k-a^{k-h}}(N,R2^k;x)u_k^*(N,R2^k;x)+
      \sum_{l=1}^{h-1} F_{a_k}(N,R2^{k-l},x)u_{k-l}^*(N,R2^{k-l};x)
\\
 \le 
      cp(a) (R2^k)^d \sum_{l=0}^{h-1} 2^{-ld} u_{k-l}^*(N,R2^{k-l};x).
  \label{a2-ineq}
\end{multline}
Here the sum over $l$ is harmless, because the number of terms is independent of $k$.

The improved asymptotics of Corollary~\ref{Fa0-cor} come into play as
reinforcements for the series for $a^{(3)}_\psi(x,D)u$. Indeed, for $Q=2^j$ the
first part of \eqref{a3-eq} gives, for $M>0$, the third estimate
\begin{equation}
\begin{split}
    |a_j(x,D)u^{j-h}(x)| &\le \sum_{k=0}^{j-h} |a_j(x,D)u_k(x)| 
   \le \sum_{k=0}^{j} F_{a_j}(N,R2^k;x)u_k^*(N,R2^k;x)
\\
  &\le c'_M p(a) 2^{-jM} \sum_{k=0}^j  (R2^k)^{d+\delta M}u^*_k(N,R2^k;x).
\end{split}
  \label{aj-ineq}
\end{equation}
Here the number of terms on the right-hand side depends on $j$, but
this is manageable due to $2^{-jM}$, which serves as a summation factor. 
Altogether this proves

\begin{theorem} \label{a123-thm}
  For each symbol $a$ in $S^d_{\rho,\delta}$, $0\le\delta<\rho\le1$,
the paradifferential decomposition \eqref{a123-eq} is valid with the terms in
\eqref{a1-eq}--\eqref{a3-eq}
having the spectral relations \eqref{supp1-eq},\eqref{supp3-eq}, \eqref{supp2-eq}
and the pointwise estimates \eqref{akh-ineq}, \eqref{a2-ineq}, \eqref{aj-ineq}.
\end{theorem}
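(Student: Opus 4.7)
The plan is to start from the approximation formula \eqref{aPsi-eq} and insert the Littlewood--Paley decomposition $\psi(2^{-m}\eta)=\psi(\eta)+\sum_{j=1}^{m}\varphi(2^{-j}\eta)$ simultaneously into both the $x$-factor $\psi(2^{-m}D_x)$ and the $\eta$-factor $\psi(2^{-m}\eta)$. This rewrites each approximant as a double sum $\sum_{0\le j,k\le m} a_j(x,D)u_k$, which I would then partition into the three index regions $\{j\le k-h\}$, $\{|j-k|<h\}$ and $\{k\le j-h\}$; letting $m\to\infty$ recovers precisely the three series \eqref{a1-eq}--\eqref{a3-eq}. Convergence in ${\cal S}'$ of each series is in turn a consequence of the pointwise estimates \eqref{akh-ineq}--\eqref{aj-ineq} to be proved below, combined with the polynomial growth of $u^*_k$ and the dyadic spectral separation of the summands, which ensures absolute summability after pairing with any Schwartz function.

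For the spectral inclusions \eqref{supp1-eq}, \eqref{supp3-eq} and \eqref{supp2-eq} I would invoke the spectral support rule \eqref{Sigma-eq}. The Fourier transform of $a_j(x,\eta)$ in $x$ is localised in the corona $\{\xi:|\xi|\sim 2^j\}$ for $j\ge 1$ and in the ball of radius $R$ for $j=0$, while ${\cal F} u_k$ is localised at scale $2^k$; the Minkowski sum of these supports therefore lies in the corona $\{R_h2^k\le|\xi|\le \tfrac{5R}{4}2^k\}$ precisely when $|j-k|\ge h$. The integer $h$ was picked exactly so that the scales $2^jR$ and $2^k r$ stay well separated, preventing overlap across the boundary; the diagonal band $|j-k|<h$ contributes only the crude ball inclusion \eqref{supp2-eq}.

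The pointwise estimates are then obtained term by term from the factorisation inequality of Theorem~\ref{Fau*-thm}. For \eqref{akh-ineq}, since $u_k$ has spectrum in $\{r2^{k-1}\le|\eta|\le R2^k\}$, the auxiliary function in $F_{a^{k-h}}$ can be chosen as $\psi(R^{-1}2^{-k}\cdot)$, which puts Corollary~\ref{Fa-cor} in its corona form at disposal; this gives $F_{a^{k-h}}(N,R2^k;x)\le cp(a^{k-h})(R2^k)^d$, while the convolution identity $a^{k-h}={\cal F}^{-1}\psi(2^{h-k}\cdot)\ast_x a$ produces $p(a^{k-h})\le \nrm{{\cal F}^{-1}\psi}{1}p(a)$, uniform in $k$. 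For \eqref{a2-ineq} the same reasoning applies to each of the at most $h$ summands in \eqref{a2-eq}, save the boundary term at $k=0$ where the general (non-corona) part of Corollary~\ref{Fa-cor} costs a fixed power of $R$. For the decisive estimate \eqref{aj-ineq} the symbol $a_j=\varphi(2^{-j}D_x)a$ is a frequency-modulated symbol to which Corollary~\ref{Fa0-cor} applies with $Q=2^j$, yielding the crucial gain $2^{-jM}$; this geometric factor then absorbs the inner sum $\sum_{k=0}^{j-h}$ whose length grows only linearly in $j$.

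The main obstacle I anticipate is the uniform seminorm bookkeeping: verifying that for all the dyadic localisations $a_j$, $a^{k-h}$, $a^k-a^{k-h}$ the relevant seminorm on $S^d_{\rho,\delta}$ is controlled by a single $j,k$-independent seminorm $p(a)$. Since each cut-off is a convolution of $a$ in $x$ against a rescaled inverse Fourier transform of $\psi$, $\varphi$, or $\psi-\psi(2^h\cdot)$, these reduce to Young's inequality with dimensionless $L_1$-norms of the convolution kernels, so no new difficulty really arises; it is essentially a matter of assembling the estimates of Corollaries~\ref{Fa-cor} and \ref{Fa0-cor} already proved.
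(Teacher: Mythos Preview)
Your proposal is correct and follows essentially the same route as the paper: the paper's argument (which precedes the theorem statement rather than following it) also starts from \eqref{aPsi-eq}, inserts the Littlewood--Paley decomposition in both variables to obtain \eqref{a1-eq}--\eqref{a3-eq}, reads off the spectral inclusions from the spectral support rule \eqref{Sigma-eq}, and then derives \eqref{akh-ineq}, \eqref{a2-ineq}, \eqref{aj-ineq} by combining the factorisation inequality with Corollaries~\ref{Fa-cor} and \ref{Fa0-cor}, using the same convolution bound $p(a^{k-h})\le \nrm{{\cal F}^{-1}\psi}{1}\,p(a)$ for the uniform seminorm control that you single out as the main bookkeeping issue.
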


Not surprisingly, Theorem~\ref{a123-thm}  
yields boundedness in several scales. Perhaps this is most
transparent for the Besov spaces $B^{s}_{p,q}(\Rn)$. These generalise both 
the Sobolev spaces
$H^s(\Rn)$ and the H{\"o}lder spaces $C^s(\Rn)$ (with $0<s<1$, cf
Proposition~\ref{Cs*-prop}) as
\begin{equation}
  H^s=B^s_{2,2},\qquad C^s=B^s_{\infty ,\infty }.
  \label{HCB-eq}
\end{equation}
The spaces $B^{s}_{p,q}$ are for $s\in \R$, $p,q\in \,]0,\infty ]$
defined by means of the Littlewood--Paley decomposition as
the $u\in {\cal S}'$ for which the following (quasi-)norm is finite,
\begin{equation}
  \Nrm{u}{B^{s}_{p,q}}=(\sum_{j=0}^\infty
2^{sjq}\Nrm{\varphi(2^{-j}D)u}{p}^q)^{1/q};
  \label{Bspq-id}
\end{equation}
hereby the norm in $\ell_q$ should be read as the supremum over $j$ for
$q=\infty $. (Often a specific choice of the function $\psi$ is stipulated,
but this is immaterial as they all lead to equivalent norms on the spaces).
For $p,q\in [1,\infty ]$ the space $B^{s}_{p,q}$ is a Banach space.
Note that the first part of \eqref{HCB-eq} follows easily from
\eqref{Bspq-id}; cf.\ \cite{H97,T2} for the second.

Now Theorem~\ref{a123-thm} gives the following continuity result:
\begin{theorem}
  \label{Bspq-thm}
When $a(x,\eta)$ belongs to $S^d_{1,\delta}(\Rn\times \Rn)$ and $0\le
\delta<1$, then 
\begin{align}
  a(x,D)&\colon H^{s+d}(\Rn)\to H^{s}(\Rn)
  \label{aHs-eq}
\\
  a(x,D)&\colon B^{s+d}_{p,q}(\Rn)\to B^{s}_{p,q}(\Rn)
  \label{aBspq-eq}
\end{align}
is continuous for every $s\in\R $, $0<p\le \infty $, $0<q\le \infty $.
\end{theorem}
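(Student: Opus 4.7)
The plan is to apply the paradifferential decomposition \eqref{a123-eq} and bound each of the three series $a_\psi^{(i)}(x,D)u$, $i=1,2,3$, in $B^{s}_{p,q}(\Rn)$ by combining the pointwise estimates of Theorem~\ref{a123-thm} with the $L_p$-maximal inequality from Theorem~\ref{max-thm}. Throughout I fix $N>n/p$ so that the maximal inequality applies to each dyadic block. The Sobolev estimate \eqref{aHs-eq} is then the specialisation $p=q=2$ of \eqref{aBspq-eq} via $H^s=B^s_{2,2}$.

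For $a_\psi^{(1)}(x,D)u$ each summand $a^{k-h}(x,D)u_k$ has Fourier support in a corona of size $\sim 2^k$ by \eqref{supp1-eq}, so $\varphi(2^{-j}D)$ annihilates it unless $|j-k|$ is at most a fixed constant. The factorisation estimate \eqref{akh-ineq} followed by Theorem~\ref{max-thm} yields $\nrm{a^{k-h}(x,D)u_k}{p}\le c\,2^{kd}\nrm{u_k}{p}$; multiplying by $2^{js}$ and taking $\ell_q$ in $j$ gives $\Nrm{a_\psi^{(1)}(x,D)u}{B^s_{p,q}}\le C\Nrm{u}{B^{s+d}_{p,q}}$ after a harmless shift of indices. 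The piece $a_\psi^{(3)}(x,D)u$ is treated analogously, using the corona relation \eqref{supp3-eq} and, crucially, the improved asymptotics \eqref{aj-ineq}: choosing $M$ so large that $M>|s|$, the decay factor $2^{-jM}$ dominates the inner sum $\sum_{k\le j}2^{k(d+\delta M)}\nrm{u_k}{p}$, and a discrete Hardy/Young inequality in $\ell_q$ absorbs it into $\Nrm{u}{B^{s+d}_{p,q}}$.

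The middle term $a_\psi^{(2)}(x,D)u$ is the main obstacle. Each summand has Fourier support only in a \emph{ball} of radius $\sim 2^k$ by \eqref{supp2-eq}, so $\varphi(2^{-j}D)$ of the $k$-th summand is non-zero for every $k\ge j-c$. Combining \eqref{a2-ineq} with the maximal inequality still gives the dyadic bound $\lesssim 2^{kd}\nrm{u_k}{p}$, which leads to a discrete convolution of the form
\begin{equation*}
 2^{js}\nrm{\varphi(2^{-j}D)a_\psi^{(2)}(x,D)u}{p}\le C\sum_{m\le c}2^{ms}\,b_{j-m},
 \qquad b_k:=2^{k(s+d)}\nrm{u_k}{p}.
\end{equation*}
For $s>0$ the kernel $m\mapsto 2^{ms}$ on $m\le c$ is summable, so Young's inequality in $\ell_q$ closes the estimate. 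For the harder range $s\le 0$ I plan to reduce to that case by composing with the lifting operator $J^{-t}=(1-\Delta)^{-t/2}$ for some $t>-s$: since $0\le\delta<\rho=1$, the symbol calculus delivers $a(x,D)\circ J^{-t}\in\OP S^{d-t}_{1,\delta}$, and the isomorphism $J^{-t}\colon B^{s+t}_{p,q}\to B^s_{p,q}$ from standard Littlewood--Paley theory then yields the full range of $s$ (and the analogous device covers small $p$ and $q$).
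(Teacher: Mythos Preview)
Your overall plan is the paper's: split $a(x,D)u$ as in \eqref{a123-eq}, feed \eqref{akh-ineq}, \eqref{a2-ineq}, \eqref{aj-ineq} through Theorem~\ref{max-thm}, and use the corona/ball properties \eqref{supp1-eq}--\eqref{supp2-eq} to sum in $B^s_{p,q}$. The treatments of $a^{(1)}_\psi$ and $a^{(3)}_\psi$ are fine; your choice $M>|s|$ is slightly stronger than the $M>\max(0,s)$ actually needed, but harmless.

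The gap is in your reduction step for $a^{(2)}_\psi$. You compose on the \emph{right}, forming $b(x,D)=a(x,D)\circ J^{-t}\in\OP S^{d-t}_{1,\delta}$, and then invoke ``the isomorphism $J^{-t}\colon B^{s+t}_{p,q}\to B^s_{p,q}$''. Two problems. First, $J^{-t}=(1-\Delta)^{-t/2}$ has order $-t$, so it maps $B^{s+t}_{p,q}$ to $B^{s+2t}_{p,q}$, not to $B^s_{p,q}$; the stated isomorphism points the wrong way. Second, and more seriously, even with correct bookkeeping the argument cannot close: from $a=b\circ J^{t}$ you get, for $u\in B^{s+d}_{p,q}$, first $J^{t}u\in B^{s+d-t}_{p,q}$ and then you need $b\colon B^{s+(d-t)}_{p,q}\to B^s_{p,q}$, which is exactly the case $\sigma=s$ for the order-$(d-t)$ operator $b$. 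The obstruction from the dyadic \emph{ball} condition \eqref{supp2-eq} lives on the \emph{target} index, so right composition cannot move you out of the bad range. (Your alternative reading---apply the good case to $b$ at level $s+t$, getting $b\colon B^{s+d}_{p,q}\to B^{s+t}_{p,q}$, and then descend via a lift---would require composing with $J^{t}$ on the \emph{left} of $b$, which gives $J^{t}\circ a\circ J^{-t}\neq a$.)

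The fix, which is what the paper does, is to lift on the left: write $a(x,D)=\Lambda^{t}\bigl(\Lambda^{-t}a(x,D)\bigr)$ with $\Lambda^{-t}a(x,D)\in\OP S^{d-t}_{1,\delta}$ by the calculus (valid since $\delta<1$). Then the already established case applies to $\Lambda^{-t}a(x,D)$ at the shifted target level $s+t$, i.e.\ $\Lambda^{-t}a(x,D)\colon B^{(s+t)+(d-t)}_{p,q}=B^{s+d}_{p,q}\to B^{s+t}_{p,q}$, and the genuine isomorphism $\Lambda^{t}\colon B^{s+t}_{p,q}\to B^{s}_{p,q}$ finishes it. Note also that for $0<p<1$ the ball-type summation lemma behind your $a^{(2)}_\psi$ estimate requires $s>\fracnp-n$, not merely $s>0$; the paper accounts for this by taking $t=2|s|+1+\fracnp-n$ in that range, and you should too.
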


\begin{proof}
Taking $L_p$- and $\ell_q$-norms on both sides of
\eqref{akh-ineq}, Theorem~\ref{max-thm} gives  for $N>n/p$,
\begin{equation}
  (\sum_{k=0}^\infty 2^{skq}\Nrm{a^{k-h}(x,D)u_k}{p}^q)^{1/q}
 \le c p(a)  (\sum_{k=0}^\infty 2^{(s+d)kq}\Nrm{u_k}{p}^q)^{1/q} 
  =c p(a)\nrm{u}{B^{s+d}_{p,q}}.
\end{equation}
Because of the dyadic corona property \eqref{supp1-eq}, 
the above estimate implies convergence of $a^{(1)}_\psi(x,D)u=\sum a^{k-h}(x,D)u_k$
to an element in $B^{s}_{p,q}$, the norm of which is 
estimated by the right-hand side (this is well known, cf.\ \cite{Y1},\cite[2.3.2]{RuSi96}
or \cite{JJ10tmp}). So for $m=1$,
\begin{equation}
  \nrm{a^{(m)}_{\psi}(x,D)u}{B^{s}_{p,q}}\le c'' p(a)\nrm{u}{B^{s+d}_{p,q}}.
  \label{a(m)-ineq}
\end{equation}

The contribution $a^{(3)}(x,D)$ in \eqref{a123-eq} is treated similarly,
except for the sum over $k$. This is handled with a small lemma, namely
$\sum_{j=0}^\infty 2^{sjq}
(\sum_{k=0}^j |b_k|)^q\le c \sum_{j=0}^\infty 2^{sjq}|b_j|^q$, valid for all 
$b_j\in \C$ and $0<q\le \infty $ provided $s<0$; cf.\ \cite{Y1}.
Thus \eqref{aj-ineq} implies
\begin{equation}
  \begin{split}
 \sum_{j=0}^\infty 2^{sjq}   \Nrm{a_j(x,D)u^{j-h}}{p}^q
&\le \sum_{j=0}^\infty 2^{(s-M)jq} (\sum_{k=0}^{j} 
      cp(a)2^{k({d+\delta M})} \Nrm{u_k^*(N,R2^k;\cdot )}{p})^q
\\
&\le  cp(a)^q \sum_{j=0}^\infty 2^{(s+d-(1-\delta)M)jq} \Nrm{u_j}{p}^q
\\
&= cp(a)^q\nrm{u}{B^{s+d-(1-\delta)M}_{p,q}}
  \end{split}
\end{equation}
provided $M>0$ and $M>s$. This implies \eqref{a(m)-ineq} for $m=3$.

For $a^{(2)}(x,D)u$ the estimate is a little simpler, for in \eqref{a2-ineq}
one only needs to apply norms of $L_p$ and $\ell_q$ with respect to $x$ and $k$,
respectively, and use the (quasi-)triangle inequality.
Because \eqref{supp2-eq} is a dyadic ball property, the resulting estimate gives
\eqref{a(m)-ineq} with $m=2$ only in case $s>\max(0,\fracnp-n)$. 
But then, via \eqref{a123-eq}, this shows \eqref{aBspq-eq}.

However, one can reduce to such $s$ by writing 
$a(x,D)=\Lambda^t(\Lambda^{-t} a(x,D))$ with $t=2|s|+1$ (or
$t=2|s|+1+\fracnp-n$ if $0<p<1$), 
for $\Lambda^t=\OP((1+|\eta|^2)^{t/2})$ is of order 
$t$ in the $B^{s}_{p,q}$-scale. 
This shows \eqref{aBspq-eq}, hence \eqref{aHs-eq} as a special case.
\end{proof}

Theorem~\ref{Bspq-thm} is well established, of course. 
For example \cite[Thm.~18.1.13]{H} or \cite[Thm.~3.6]{SRay91} 
gives the $H^s$-part with a classical reduction to
Schur's lemma. The present
proof should be interesting because it combines 
Littlewood--Paley theory with the factorisation inequality etc.

The flexibility of this method is apparent from the fact that
it extends \emph{at once} to the $B^{s}_{p,q}$ 
with arbitrary $p,q\in \,]0,\infty ]$.
The previous proofs for $B^s_{p,q}$ in e.g.\ \cite{Bou82,Y1} are cumbersome due
to the use of elementary symbols and multiplier results.

\section{The case of type $1,1$-operators}   
  \label{t11-sect}

The above results carry over to type $1,1$-operators, ie to $\delta=1$ with almost no changes. 
Previously type $1,1$-operators have  been treated in fundamental contributions
of Ching~\cite{Chi72}, Stein 1972 (cf.~\cite{Ste93}), 
Parenti and Rodino~\cite{PaRo78},  Meyer~\cite{Mey81},
Bourdaud~\cite{Bou83,Bou88}, H{\"o}rmander~\cite{H88,H89,H97} and Torres~\cite{Tor90}.

The present methods were in fact developed for such operators, which
emphasizes the efficacy of pointwise estimates. 
But since the topic is specialised, only brief remarks on the outcome will be given here.

\bigskip

The reader may consult \cite{JJ08vfm,JJ10tmp} for a review of 
\emph{operators} of type $1,1$ and
a systematic treatment. Here it suffices to recall from \cite{JJ08vfm} that
for $a\in S^d_{1,1}(\Rn\times \Rn)$ the identity \eqref{aPsi-eq} is 
used as the \emph{definition:}
when the limit there exists in ${\cal D}'(\Rn)$ and is independent of $\psi$,
then $u$ belongs to the domain $\in D(a(x,D))$ 
and the action of $a(x,D)$ on $u$ is set equal to
the limit in \eqref{aFE-eq}; cf.\ \cite{JJ08vfm}.

For example, if $u$ is in ${\cal S}+{\cal F}^{-1}{\cal E}'$ the limit in \eqref{aPsi-eq} 
exists and equals the right-hand side of \eqref{aFE-eq}. 
Since the latter does not depend on $\psi$, nor on $v$, $v'$, 
one has by definition that ${\cal S}+{\cal F}^{-1}{\cal E}'\subset D(a(x,D))$,
and \eqref{aFE-eq} holds.
Cf.\ \cite[Cor.~4.7]{JJ08vfm}.

Therefore the proof of Theorem~\ref{Fau*-thm}, which departs from \eqref{aFE-eq}, 
can be repeated for type $1,1$-operators: 

\begin{theorem}
The factorisation inequality $a(x,D)u(x)\le F_a(x)u^*(x)$ is valid \emph{verbatim}
for symbols $a\in S^d_{1,1}(\Rn\times\Rn)$; cf.\ Theorem~\ref{Fau*-thm}.
\end{theorem}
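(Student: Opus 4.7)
The plan is to reuse the proof of Theorem~\ref{Fau*-thm} without modification, the only issue being that for type $1,1$ symbols the operator $a(x,D)$ is not defined by the naive oscillatory integral but via the limit in \eqref{aPsi-eq}. So the task reduces to justifying that, for $u\in \mathcal{F}^{-1}\mathcal{E}'(\Rn)$ with $\supp\hat u\subset \overline B(0,R)$, the decisive identity
\begin{equation}
  a(x,D)u(x)=\OP(a(1\otimes \chi))u(x)
  \label{key-id}
\end{equation}
from the start of the proof of Theorem~\ref{Fau*-thm} still holds, with $\chi\in C_0^\infty(\Rn)$ chosen as before to equal $1$ on a neighbourhood of $\supp \hat u$.

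Once \eqref{key-id} is in place, the remainder of the argument transfers verbatim: the symbol $a(x,\eta)\chi(\eta)$ is in $S^{-\infty}$ because $\chi$ has compact support in $\eta$ and the seminorm estimates \eqref{Srd-eq} with $\delta=1=\rho$ cause no difficulty on the compact set $\supp\chi$; hence ${\cal F}^{-1}_{\eta\to y}(a(x,\cdot)\chi(\cdot))$ is rapidly decaying in $y$ locally uniformly in $x$, and the Paley--Wiener--Schwartz bound on $u$ lets us write \eqref{key-id} as an integral. Bounding $|u(x-y)|$ by $(1+R|y|)^N u^*(N,R;x)$ and pulling the sup outside then gives the factorisation inequality with $F_a$ as in \eqref{Fa-id}, and boundedness/continuity of $F_a$ follows just as before by inserting $(1+|y|^2)^{N'}(1+|y|^2)^{-N'}$ and differentiating under the Fourier transform.

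The main obstacle is thus \eqref{key-id}. I would argue as follows. By the review recalled from \cite{JJ08vfm}, $\mathcal S+\mathcal F^{-1}\mathcal E'\subset D(a(x,D))$, and on this subspace the limit \eqref{aPsi-eq} is independent of the cut-off $\psi$ and equals \eqref{aFE-eq} with any admissible $\chi$. Applying this to the splitting $u=0+u$, formula \eqref{aFE-eq} reads exactly $a(x,D)u=\OP(a(1\otimes \chi))u$, which is \eqref{key-id}. Since $a(1\otimes\chi)\in S^{-\infty}$ the right-hand side is an ordinary pseudo-differential action, and the proof of Theorem~\ref{Fau*-thm} applies word for word.

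A small point worth remarking is that the more flexible choice of $\chi$, required to equal $1$ only on $\bigcup_{x}\supp a(x,\cdot)\hat u(\cdot)$, is also available here, because the derivation of \eqref{aFE-eq'} in Section~\ref{prep-sect} nowhere used $\delta<\rho$; it used only that $a(1\otimes\chi)\in S^{-\infty}$ and a mollification of $\mathcal F u$, both of which are intact in the type $1,1$ setting. Thus the full statement of Theorem~\ref{Fau*-thm}, including the supplementary remark on $\chi$, carries over unchanged.
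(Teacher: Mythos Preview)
Your proposal is correct and follows essentially the same approach as the paper: the paper's proof consists precisely of the observation (stated just before the theorem) that for $u\in\mathcal{S}+\mathcal{F}^{-1}\mathcal{E}'$ the defining limit \eqref{aPsi-eq} exists and equals the right-hand side of \eqref{aFE-eq}, so that the proof of Theorem~\ref{Fau*-thm} can be repeated verbatim. Your write-up spells out this logic in more detail, including the subsidiary remark on the flexible choice of $\chi$, but the route is identical.
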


That $a(x,D)\colon {\cal O}_M\to {\cal O}_M$
is also true for type $1,1$-operators, but the proof of Corollary~\ref{aOO-cor} 
needs to be changed to obtain 
the decisive inclusion ${\cal O}_M\subset D(a(x,D))$
(cf.~\cite{JJ10tmp} for more details on this).

However, the proof of Corollary~\ref{aFE-cor} gives without changes 

\begin{theorem}
  \label{aFE11-thm}
For $a\in S^d_{1,1}$, $d\in\R$, $p\in \,]0,\infty ]$,
$R\ge 1$ and $N>n/p$ one has
\begin{equation}
  \nrm{a(x,D)u}{p}\le C(N,R) \nrm{u}{p}
  \label{iLp-eq''}
\end{equation}
whenever $u\in L_p(\Rn)\bigcap{\cal S}'(\Rn)$
fulfils $\supp\hat u\subset \overline{B}(0,R)$.
\end{theorem}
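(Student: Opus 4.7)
The plan is to retrace the proof of Corollary~\ref{aFE-cor} verbatim, checking that each of its ingredients remains available once $\delta=1$. The hypothesis $u\in L_p\cap{\cal S}'$ with $\supp\hat u\subset \Bbar(0,R)$ means $u\in {\cal F}^{-1}{\cal E}'(\Rn)$, so from the domain discussion at the beginning of Section~\ref{t11-sect} (in particular ${\cal S}+{\cal F}^{-1}{\cal E}'\subset D(a(x,D))$ together with \eqref{aFE-eq}) the distribution $a(x,D)u$ is unambiguously defined by the limit \eqref{aPsi-eq}; this is the one substantive type $1,1$ input.

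Next, I would apply the factorisation inequality stated just above the theorem (the type $1,1$ analogue of Theorem~\ref{Fau*-thm}), giving the pointwise bound
\begin{equation*}
  |a(x,D)u(x)|\le F_a(N,R;x)\,u^*(N,R;x),\qquad x\in\Rn.
\end{equation*}
The boundedness of $F_a(N,R;\cdot)$ shown at the end of the proof of Theorem~\ref{Fau*-thm} used only the compact support of $\chi$ and the ensuing $L_\infty$-control of $(1-\lap_\eta)^{N'}[a(x,\eta)\chi(\eta)]$ in $(x,y)$; this argument is insensitive to the type $1,1$ condition, so $M:=\sup_{x\in\Rn}F_a(N,R;x)<\infty$.

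Taking $L_p$-norms of both sides and appealing to the maximal inequality of Theorem~\ref{max-thm}, which applies because $N>n/p$ and $\supp\hat u\subset\Bbar(0,R)$, gives
\begin{equation*}
  \nrm{a(x,D)u}{p}\le M\,\nrm{u^*(N,R;\cdot)}{p}\le M\,C'_{n,N,p}\nrm{u}{p},
\end{equation*}
so the estimate \eqref{iLp-eq''} holds with $C(N,R)=M\,C'_{n,N,p}$ (with the convention that $C'_{n,N,p}=1$ when $p=\infty$, as the maximal inequality is then the trivial bound $\nrm{u^*}{\infty}\le\nrm{u}{\infty}$ from Lemma~\ref{uu*-lem}).

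The only delicate point in this scheme is really the first paragraph: for $\delta<\rho$ one has $a(x,D)\colon{\cal S}'\to{\cal S}'$ automatically by duality, whereas for $\delta=1=\rho$ one must appeal to the specific domain results to make sense of $a(x,D)u$. Once that is granted, the combination of the factorisation inequality with the $L_p$-boundedness of $u\mapsto u^*$ runs identically to the $\delta<\rho$ case, and no refinement of the preceding Besov/Lizorkin--Triebel machinery is needed.
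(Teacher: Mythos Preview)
Your proposal is correct and follows essentially the same approach as the paper: the paper simply remarks that ``the proof of Corollary~\ref{aFE-cor} gives without changes'' the result, relying on the type $1,1$ factorisation inequality already established and the maximal inequality of Theorem~\ref{max-thm}. Your added commentary on why $a(x,D)u$ is well defined via the domain inclusion ${\cal S}+{\cal F}^{-1}{\cal E}'\subset D(a(x,D))$ is exactly the implicit input the paper draws on from the preceding discussion in Section~\ref{t11-sect}.
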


This result is a novelty in the type $1,1$-context.
It is noteworthy because some operators in
$\OP(S^0_{1,1})$ are unbounded on $L_p$, even for $p=2$,
by a construction of Ching \cite{Chi72}\,---\,and therefore pointwise
estimates seem indispensable for Theorem~\ref{aFE11-thm}.

It is also straightforward to see that one has

\begin{theorem}  \label{Fa11-thm}
  For the symbol factor $F_a(x)$, the estimates by integrals in 
Theorem~\ref{Fa-thm} are valid \emph{verbatim} for $\delta=1=\rho$. 
Similarly the estimates by symbol seminorms
in Corollaries~\ref{Fa-cor} and \ref{Fa0-cor} carry over. In particular
the corona condition yields
\begin{equation}
  F_a(N,R;x)={\cal O}(R^d),\qquad  F_{a_Q}(N,R;x)={\cal O}(Q^{-M}R^{d+M})
  \label{Oa11-eq}
\end{equation}
for $R\to\infty$  and $Q\to\infty$ (fixed $R$), respectively.
\end{theorem}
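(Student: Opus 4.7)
The plan is to verify that the proofs of Theorem~\ref{Fa-thm} and Corollaries~\ref{Fa-cor}--\ref{Fa0-cor} are insensitive to the standing condition $\delta<\rho$, so that they transfer verbatim to type $1,1$. The key observation is that the symbol factor $F_a(N,R;x)$, as defined in \eqref{Fa-id}, depends for each fixed $x$ only on the $\eta$-behaviour of $a(x,\cdot)\chi(\cdot)$ as a compactly supported function; no $x$-derivatives of the symbol intervene anywhere in its estimation.

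First I would re-run the proof of Theorem~\ref{Fa-thm}: after applying \eqref{zN-ineq}, Cauchy--Schwarz and Plancherel, the only quantities that appear are $|D^\alpha_\eta a(x,\eta)|$ for $|\alpha|\le k$. The H\"ormander bound \eqref{Srd-eq} with $\beta=0$ gives $|D^\alpha_\eta a(x,\eta)|\le p_{\alpha,0}(a)(1+|\eta|)^{d-|\alpha|}$ uniformly in $x$ for every $(\rho,\delta)\in[0,1]^2$, in particular for $\rho=\delta=1$. Hence \eqref{FaMH-eq} carries over without change of constants. The derivation of \eqref{Rmax-eq} by the substitution $\eta=R\zeta$ and the sharpened corona estimate \eqref{Rd-eq} in Corollary~\ref{Fa-cor} likewise use only seminorms of the form $p_{\alpha,0}(a)$, and so go through unchanged, producing $F_a(N,R;x)=\mathcal{O}(R^d)$ under the corona condition.

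For Corollary~\ref{Fa0-cor} the same Taylor expansion of $a_Q(x,\eta)=\varphi(Q^{-1}D_x)a(x,\eta)$ in $x$ applies. The only point requiring attention is that $\partial^\beta_x a$, which occurs with $|\beta|=M$, now lies in $S^{d+M}_{1,1}$ rather than $S^{d+\delta M}_{1,\delta}$ with $\delta<1$. Using the type $1,1$ version of Corollary~\ref{Fa-cor} just established, applied to $\partial^\beta_x a\in S^{d+M}_{1,1}$, one still bounds $F_{\partial^\beta_x a}$ by $c\,p(\partial^\beta_x a)R^{d+M}$ in the corona case (respectively by $c\,p(\partial^\beta_x a)R^{\max(d+M,[N+n/2]+1)}$ in the general case). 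Combined with the gain $Q^{-M}$ coming from the vanishing moments of $\check\varphi$, this yields the two claimed asymptotics in \eqref{Oa11-eq}.

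The main obstacle is purely clerical: one must check at every step that each seminorm $p$ invoked is a finite sum of $p_{\alpha,\beta}(a)$ with $|\alpha|\le[N+n/2]+1$ and $|\beta|\le M$, so that $p$ is finite on $S^d_{1,1}$ by \eqref{Srd-eq}. Since $N$ and $M$ are fixed parameters throughout, this is immediate, and no restriction on $\delta$ beyond $\delta\le 1$ is used anywhere in the argument.
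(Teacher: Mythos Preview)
Your proposal is correct and is exactly the kind of verification the paper intends: the text gives no explicit proof of Theorem~\ref{Fa11-thm} beyond ``It is also straightforward to see'', and your re-inspection of the proofs of Theorem~\ref{Fa-thm} and Corollaries~\ref{Fa-cor}--\ref{Fa0-cor} confirms that only seminorms $p_{\alpha,0}(a)$ (and, after the Taylor step, $p_{\alpha,\beta}(a)$ with $|\beta|=M$) intervene, all of which remain finite for $a\in S^d_{1,1}$, so setting $\delta=1$ simply replaces $d+\delta M$ by $d+M$ in \eqref{Faj-ineq}.
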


Moreover, the paradifferential decomposition in Section~\ref{corona-sect} is
unchanged, although for type $1,1$-operators it has to be made for arbitrary
$\psi$ because of their definition.

As a difference it holds for type $1,1$-operators that the series for $a^{(2)}(x,D)u$
in \eqref{a2-eq} converges if and only if $u\in D(a(x,D))$. This results at once
from the fact that
the series for $a^{(1)}(x,D)u$ and $a^{(3)}(x,D)u$ in \eqref{a1-eq}, \eqref{a3-eq}
converge for every $u\in {\cal S}'$, which was proved in \cite{JJ10tmp} 
by combining a lemma of Coifman and Meyer \cite[Ch.~15]{CoMe97} 
with the pointwise estimates summed up in Theorem~\ref{Fa11-thm}.

Theorem~\ref{a123-thm} also carries over to
arbitrary type $1,1$-operators, whence the boundedness in 
Theorem~\ref{Bspq-thm} does so for large $s$:
\begin{theorem}
  \label{a11Bspq-thm}
For each $a$ in $S^d_{1,1}(\Rn\times \Rn)$ the operator is continuous
($p,q\in\,]0,\infty]$)
\begin{align}
  a(x,D)&\colon H^{s+d}(\Rn)\to H^{s}(\Rn),\quad\text{ for $s>0$},
  \label{a11Hs-eq}
\\
  a(x,D)&\colon B^{s+d}_{p,q}(\Rn)\to B^{s}_{p,q}(\Rn), 
  \quad\text{ for $s>\max(0,\fracnp-n)$}.
  \label{a11Bspq-eq}
\end{align}
\end{theorem}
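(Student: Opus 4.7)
The plan is to mimic the proof of Theorem~\ref{Bspq-thm} almost verbatim, exploiting that the paradifferential decomposition \eqref{a123-eq}--\eqref{a3-eq} together with the pointwise estimates \eqref{akh-ineq}, \eqref{a2-ineq}, \eqref{aj-ineq} carry over to the type $1,1$ setting by the statements preceding this theorem (in particular Theorem~\ref{Fa11-thm}). For a given $u\in B^{s+d}_{p,q}(\Rn)$, the series \eqref{a1-eq} and \eqref{a3-eq} for $a^{(1)}_{\psi}(x,D)u$ and $a^{(3)}_{\psi}(x,D)u$ converge in ${\cal S}'$ for every $u\in{\cal S}'$, and it suffices to establish $B^{s}_{p,q}$-estimates for all three pieces; convergence of the $a^{(2)}$-series in $B^{s}_{p,q}$ will then secure that $u\in D(a(x,D))$ and that $a(x,D)u$ equals the sum, satisfying the asserted bound.

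For $a^{(1)}_{\psi}(x,D)u$ I would take $L_p$- and $\ell_q$-norms on both sides of \eqref{akh-ineq}, apply Theorem~\ref{max-thm} with $N>\fracnp$ to replace $u_k^*$ by $\nrm{u_k}{p}$, and invoke the dyadic \emph{corona} property \eqref{supp1-eq} so that the resulting sum reconstructs an element of $B^{s}_{p,q}$, uniformly for all $s\in\R$. The piece $a^{(3)}_{\psi}(x,D)u$ is treated analogously using \eqref{aj-ineq}: with $\delta=1$ the pointwise estimate carries the factor $2^{-jM}\sum_{k\le j}(R2^{k})^{d+M}\nrm{u_k}{p}$, and choosing any $M>s$ one applies the summation lemma
\[
\sum_{j=0}^{\infty} 2^{sjq}\Bigl(\sum_{k=0}^{j}|b_{k}|\Bigr)^{q}
\le c\sum_{j=0}^{\infty} 2^{sjq}|b_{j}|^{q}\qquad(s<0)
\]
to $b_{k}=2^{k(d+M)}\nrm{u_k}{p}$, which for $\delta=1$ gives exactly $\nrm{u}{B^{s+d}_{p,q}}^{q}$ on the right-hand side. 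Spectral corona support \eqref{supp3-eq} again governs reassembly.

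The decisive step is $a^{(2)}_{\psi}(x,D)u$: taking $L_p$- and $\ell_q$-norms in \eqref{a2-ineq} and using the maximal inequality yields the desired norm bound, but the spectra now satisfy only the \emph{ball} property \eqref{supp2-eq}, so the reassembly into a $B^{s}_{p,q}$-function is legitimate exactly when $s>\max(0,\fracnp-n)$; this is the origin of the restriction on $s$. Since the series for $a^{(1)}$ and $a^{(3)}$ always converge, convergence of $a^{(2)}$ in $B^{s}_{p,q}$ forces the defining limit in \eqref{aPsi-eq} to exist and to coincide with the sum, so that $u\in D(a(x,D))$ and \eqref{a11Bspq-eq} follows. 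The Sobolev case \eqref{a11Hs-eq} is the specialisation $p=q=2$, where $\max(0,\tfrac n2-n)=0$ and $H^{s+d}=B^{s+d}_{2,2}$.

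The main obstacle is precisely that one \emph{cannot} remove the lower bound on $s$ as was done in the proof of Theorem~\ref{Bspq-thm} by writing $a(x,D)=\Lambda^{t}(\Lambda^{-t}a(x,D))$: the composition rules that made $\Lambda^{-t}a(x,D)\in \OP(S^{d-t}_{1,\delta})$ for $\delta<1$ break down at $\delta=1$ (the composed symbol need not be of type $1,1$, and the domain of the composition is delicate), and moreover Ching's example shows that unrestricted boundedness is false for $s\le 0$. Consequently the restriction $s>\max(0,\fracnp-n)$ is intrinsic and comes out of the paradifferential splitting without any further lifting trick.
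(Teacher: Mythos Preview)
Your proposal is correct and follows essentially the same route as the paper, which simply says that the proof of Theorem~\ref{Bspq-thm} is repeated with the lift operator $\Lambda^t$ omitted. You are in fact more explicit than the paper on the domain issue---namely that convergence of the $a^{(2)}_\psi$-series in $B^{s}_{p,q}$, together with the unconditional ${\cal S}'$-convergence of the $a^{(1)}_\psi$- and $a^{(3)}_\psi$-series, secures $u\in D(a(x,D))$---and you correctly explain why the lifting trick is unavailable for $\delta=1$.
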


The proof is the same,  
except that the lift operator $\Lambda^t$ is redundant.
The boundedness was essentially shown in \cite{JJ05DTL}, 
though the formal definition of type $1,1$-operators first appeared in \cite{JJ08vfm}.

However, H{\"o}rmander's condition in \cite{H88,H89,H97},
that ${\cal F}_{x\to\xi}a(x,\eta)$ be
small along the twisted diagonal $\xi=-\eta$, allows the conditions on $s$ in 
\eqref{a11Hs-eq} and \eqref{a11Bspq-eq} to be removed. 

Indeed, in terms of a specific localisation to the 
twisted diagonal, namely the symbol
$a_{\chi,\varepsilon}(x,\eta)={\cal F}_{x\to\xi}^{-1}
(\chi(\xi+\eta,\varepsilon\eta){\cal F}_{x\to\xi}a(x,\eta))$
defined in \cite{H88,H89,H97} for a suitable
$\chi\in C^\infty$ supported where $|\eta|>|\xi|$, $|\eta|>1$,
H{\"o}rmander introduced the fundamental  
condition that for every $\sigma>0$ there is an estimate for $\varepsilon>0$:
\begin{equation}
  \sup_{R>0,\; x\in \Rn}R^{-d}\big(
  \int_{R\le |\eta|\le 2R} |R^{|\alpha|}D^\alpha_{\eta}a_{\chi,\varepsilon}
  (x,\eta)|^2\,\frac{d\eta}{R^n}
  \big)^{1/2}
  \le c_{\alpha,\sigma} \varepsilon^{\sigma+n/2-|\alpha|}.
  \label{Hsigma-eq}
\end{equation}
This is first of all interesting because of the obvious similarity with the 
Mihlin--H{\"o}rmander type estimates of the symbol factor in 
Theorems~\ref{Fa-thm} and \ref{Fa11-thm}. 

Secondly, for $a(x,\eta)$ fulfilling \eqref{Hsigma-eq} 
the conditions on $s$ in Theorem~\ref{a11Bspq-thm} were removed in \cite{JJ10tmp} 
(with an arbitrarily small loss of smoothness for $p<1$).
The proof consisted in a refinement of that of Theorem~\ref{a11Bspq-thm}, 
in which the necessary improvements for $a^{(2)}(x,D)u$ were obtained by skipping 
\eqref{Oa11-eq} and controlling the $F_a$-estimates of Mihlin-H{\"o}rmander type  
directly in terms of H{\"o}rmander's condition \eqref{Hsigma-eq}.

Furthermore, Theorem~\ref{Fa11-thm} was used in \cite{JJ10tmp} 
to bridge the gap between the general Littlewood--paley theory, 
i.e.\ the paradifferential splitting \eqref{a123-eq}, 
and symbols fulfilling \eqref{Hsigma-eq}, again
by controlling the Mihlin--H{\"o}rmander type estimates in terms of \eqref{Hsigma-eq}.
Thus it was proved explicitly in \cite{JJ10tmp} that the series for
$a^{(2)}(x,D)u$, for $u\in{\cal S}'$, converges in the topology of ${\cal S}'(\Rn)$ 
whenever \eqref{Hsigma-eq} holds.

In this connection, it deserves to be mentioned that the approach of Marschall,
recalled in Remark~\ref{Marschall-rem}, would be insufficient, since application of 
\eqref{Marschall-ineq} to the terms in $a^{(2)}(x,D)u$ would result in estimates involving
$Mu_k$: for general $u\in{\cal S}'$ the Hardy--Littlewood maximal function $Mu_k$ 
would not be finite, unlike $u^*_k(N,R;x)$ that is so for all sufficiently large $N$.

The boundedness results extend to the Sobolev spaces $H^s_p=\Lambda^{-s}L_p$, with 
$s\in\R$ and $1<p<\infty$, cf.\ \cite{JJ08vfm}, or more generally to the
Lizorkin--Triebel scale $F^{s}_{p,q}$ with $0<p<\infty$, $0<q\le\infty$; 
cf.\ \cite{JJ10tmp}.
The proofs follow the lines indicated above.

It would be outside of the topic here to give the full statements, so
the reader is referred to \cite{JJ08vfm,JJ10tmp} for more details on the
results for operators of type $1,1$.

\section{Final remarks}   \label{final-sect}
Pointwise estimates in terms of the maximal function $u^*$ were crucial for the
author's work on type $1,1$-operators \cite{JJ10tmp} and developed for that purpose, 
but they were used there with
only brief explanations. A detailed presentation has been postponed to the
present paper, because the techniques should be of interest in their own
right. This is illustrated by the proofs of
Corollary~\ref{aOO-cor},
Theorem~\ref{Bspq-thm} and Theorem~\ref{aFE11-thm} e.g.

\providecommand{\bysame}{\leavevmode\hbox to3em{\hrulefill}\thinspace}

\end{document}